\providecommand{\U}[1]{\protect\rule{.1in}{.1in}}
\newtheorem{theorem}{Theorem}[section]
\theoremstyle{plain}
\newtheorem{corollary}[theorem]{Corollary}
\newtheorem{lemma}[theorem]{Lemma}
\newtheorem{question}{Question}
\newtheorem{proposition}[theorem]{Proposition}
\theoremstyle{definition}
\newtheorem{definition}{Definition}[section]
\newtheorem{remark}{Remark}
\numberwithin{equation}{section}
\def\oversortoftilde#1{\mathop{\vbox{\m@th\ialign{##\crcr\noalign{\kern3\p@}%
      \sortoftildefill\crcr\noalign{\kern3\p@\nointerlineskip}%
      $\hfil\displaystyle{#1}\hfil$\crcr}}}\limits}
\def\sortoftildefill{$\m@th \setbox\z@\hbox{$\braceld$}%
  \braceld\leaders\vrule \@height\ht\z@ \@depth\z@\hfill\braceru$}
\begin{document}
\title[Characterization of pseudo-collarable manifolds with boundary]{Characterization of pseudo-collarable manifolds with boundary}
\author{Shijie Gu}
\address{Department of Mathematics\\
County College of Morris, Randolph, NJ, 07869}
\email{sgu@ccm.edu}
\thanks{}
\date{}
\keywords{ends, inward tame, completable, homotopy collar, plus construction, pseudo-collar, semistable, $\mathcal{Z}$-compacification, Wall finiteness obstruction, Whitehead torsion.}

\begin{abstract}
In this paper we obtain a complete characterization of pseudo-collarable $n$-manifolds for $n\geq 6$. This extends earlier work by Guilbault and Tinsley to allow for manifolds with noncompact boundary. In the same way that their work can be viewed as an extension of Siebenmann's dissertation that can be applied to manifolds with non-stable fundamental group at infinity, our main theorem can also be viewed as an extension of the recent Gu-Guilbault characterization of completable $n$-manifolds in a manner that is applicable to manifolds whose fundamental group at infinity is not peripherally stable. 

\end{abstract}
\maketitle

In 1965, Siebenmann's PhD thesis \cite{Sie65} provided necessary and sufficient conditions for an open manifold $M^m$ of dimension at least 6 to contain an open collar neighborhood of infinity, i.e., a manifold neighborhood of infinity $N$ such that $N \approx \partial N \times [0,1)$. His collaring theorem can be easily extended to cases when $M^m$ is noncompact but with compact boundary. However, the situation becomes much subtler if $\partial M^m$ is noncompact. Instead of "collaring", the term "completion" serves as an appropriate analog. An $m$-manifold $M^{m}$ with (possibly empty) boundary is \emph{completable} if there exists a compact manifold $\widehat{M}^{m}$ and a compactum $C\subseteq\partial\widehat{M}^{m}$ such that $\widehat{M}^{m}\backslash C$ is homeomorphic to $M^{m}$. In this case $\widehat{M}^{m}$ is called a \emph{(manifold) completion }of $M^{m}$. After Siebenmann did some initial work in such topic, O'Brien [O'B83] characterized completable $m$-manifolds $(m>5)$ in the case where $M^m$ and $\partial M^m$ are both 1-ended. But, in general, a completable manifold with (noncompact) boundary may have uncountably many non-isolated ends. For example, one can take any favoriate compact manifold with boundary and remove a Cantor set from its boundary. In a very recent work, Guilbault and the author provided a complete characterization for high-dimensional manifolds (with boundary).

\begin{theorem}\cite{GG18}
\emph{[Manifold Completion Theorem]}\label{Th: Completion Theorem} An
$m$-manifold $M^{m}$ ($m\geq 6$) is completable if and only if

\begin{enumerate}
\item \label{Char1}$M^{m}$ is inward tame,

\item \label{Char2}$M^{m}$ is peripherally $\pi_{1}$-stable at infinity,

\item \label{Char3}$\sigma_{\infty}(M^{m})\in\underleftarrow{\lim}\left\{
\widetilde{K}_{0}(\pi_{1}(N))\mid N\text{ a clean neighborhood of
infinity}\right\}  $ is zero, and

\item \label{Char4}$\tau_{\infty}\left(  M^{m}\right)  \in\underleftarrow{\lim
}^{1}\left\{  \operatorname*{Wh}(\pi_{1}(N))\mid N\text{ a clean neighborhood
of infinity}\right\}  $ is zero.\bigskip
\end{enumerate}
\end{theorem}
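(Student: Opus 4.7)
\medskip

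\noindent\textbf{Proof proposal.} The plan is to establish the two directions separately, with the bulk of the work in the sufficiency direction. For \emph{necessity}, fix a completion $\widehat{M}^{m}$ and a compactum $C\subseteq\partial\widehat{M}^{m}$ with $\widehat{M}^{m}\setminus C\cong M^{m}$. A cofinal family of clean neighborhoods of infinity $N\subseteq M^{m}$ then corresponds to a cofinal family of compact manifold neighborhoods of $C$ in $\widehat{M}^{m}$. Each such $N$ deformation retracts onto a compact codimension-$0$ submanifold of itself, which gives inward tameness and simultaneously shows that $N$ has the homotopy type of a finite CW complex, killing $\sigma_{\infty}(M^{m})$. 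The inverse system $\{\pi_{1}(N)\}$ near $C$ is pro-equivalent to the fundamental pro-group of the pair $(\widehat{M}^{m},C)$, which is peripherally stable since the neighborhoods in $\widehat{M}^{m}$ can be chosen to collapse onto $C$. Finally, inclusions between nested clean neighborhoods coming from collar coordinates on $\widehat{M}^{m}\setminus C$ are simple homotopy equivalences, so $\tau_{\infty}(M^{m})$ represents zero in $\underleftarrow{\lim}^{1}$.

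For \emph{sufficiency}, the plan follows the Siebenmann/Guilbault--Tinsley paradigm, adapted to the peripheral setting. First I would use inward tameness to extract a cofinal sequence $N_{0}\supseteq N_{1}\supseteq N_{2}\supseteq\cdots$ of clean neighborhoods of infinity, each finitely dominated. Using peripheral $\pi_{1}$-stability, refine the sequence so that the induced inverse system $\{\pi_{1}(N_{i})\}$ along base rays approaching the prospective compactum is stable. Vanishing of $\sigma_{\infty}$ then allows, after passing to a subsequence and performing finitely many handle trades, the neighborhoods to be chosen so that the inclusions $N_{i+1}\hookrightarrow N_{i}$ are homotopy equivalences of finite CW complexes; vanishing of $\tau_{\infty}$ in $\underleftarrow{\lim}^{1}$ then upgrades these (after a further shift) to simple homotopy equivalences. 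The geometric heart is to convert each pair $(N_{i},N_{i+1})$ into a cobordism $(W_{i};\partial N_{i},\partial N_{i+1})$ that is an $s$-cobordism rel an appropriate submanifold of $\partial W_{i}$, apply a relative $s$-cobordism theorem (valid since $m\geq 6$) to get a product structure $W_{i}\approx\partial N_{i}\times[0,1]$, and concatenate these structures to produce a boundary collar on a cofinally small neighborhood of infinity. The inverse limit of the collar data determines the compactum $C$, and gluing this collar to the complementary compact piece of $M^{m}$ yields the completion $\widehat{M}^{m}$.

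The main obstacle will be carrying out the $s$-cobordism step in the noncompact-boundary setting. Each $\partial N_{i}$ is itself noncompact, so $W_{i}$ has noncompact lateral boundary, and the classical $s$-cobordism machinery must be replaced by relative versions that control behavior along this boundary. It is precisely the \emph{peripheral} nature of hypotheses (\ref{Char2})--(\ref{Char4}) that permits this control: stability, finiteness, and torsion vanishing are demanded only along ends of $M^{m}$ approaching the prospective boundary compactum, not globally, so only those ends need to be handled by the cobordism machine. Verifying that the product structures on the $W_{i}$ patch together coherently across a potentially Cantor-like set of non-isolated ends, and that the resulting infinite concatenation genuinely defines a manifold compactification, is the technical crux of the argument.
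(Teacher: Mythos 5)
First, a framing point: the paper does not prove this statement --- it is imported verbatim from \cite{GG18} --- so there is no in-paper proof to compare against. Judging your proposal against the strategy actually used in \cite{GG18} (which the present paper mirrors when proving its own main theorem), your necessity outline is serviceable, but your sufficiency plan is missing the idea that makes the noncompact-boundary case tractable, and the step you defer to --- ``a relative $s$-cobordism theorem that controls behavior along the noncompact lateral boundary'' --- is not an available tool; it is essentially the whole difficulty. Note also that your necessity sketch never engages with what ``peripherally $\pi_1$-stable'' actually means: it is a pro-stability condition on the systems $\pi_1(\partial_M N^j\cup V_k)$ attached to each component $N^j$ of a neighborhood of infinity, not on $\{\pi_1(N)\}$, so ``the neighborhoods collapse onto $C$'' is not yet an argument for Condition~(\ref{Char2}).

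The missing idea in the sufficiency direction is the boundary-deletion reduction. For each component $N^j$ of a clean neighborhood of infinity one passes to $N^j\backslash\partial M^m$, a manifold whose boundary is the interior of the \emph{compact} frontier $\operatorname{Fr}N^j$. Lemmas \ref{Lemma: relA connected versus Q-A 1-ended} and \ref{Lemma: relA pro-pi1 versus Q-A pro-pi1} convert the peripheral hypotheses into the statement that $N^j\backslash\partial M^m$ is $1$-ended with stable $\operatorname{pro}$-$\pi_1$; Lemmas \ref{Lemma: inward tameness of deleted manifolds} and \ref{Lemma wall obstruction M - A} transfer inward tameness and the vanishing of $\sigma_\infty$; and Proposition \ref{Prop: finite ends} together with Corollary \ref{Cor: inward tame implies locally connected} supplies the $1$-endedness and the finiteness of components needed to set this up. One then applies a \emph{relativized} Siebenmann theorem (all handle moves performed away from $\partial M^m$, as in \cite[Th.~10.1]{Sie65}) to each such $1$-ended piece, and reassembles. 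This sidesteps both of the obstacles you flag: the cobordisms $W_i=\overline{N_i\backslash N_{i+1}}$ are compact (they lie in $C_{i+1}$) and the relevant boundary pieces are the compact frontiers $\operatorname{Fr}N_i$, so your triple $(W_i;\partial N_i,\partial N_{i+1})$ misidentifies the cobordism; and the possibly Cantor-like end space never has to be handled all at once, since each stage involves only the finitely many components of one $N_i$ and the compactum $C$ emerges only in the inverse limit. Without this reduction your plan has no mechanism for exploiting hypotheses that are stated \emph{relative to} $\partial_M N^j$, which is the entire novelty of the noncompact-boundary case.
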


Although Condition (\ref{Char2}) is necessary in order for such a completion to exist, such condition is too rigid to characterize many exotic examples related to current research trends in topology and geometric group theory. For instance, the exotic universal covering spaces produced by Mike Davis in \cite{Dav83} are not collarable (because Condition (\ref{Char2}) fails) yet their ends exhibit some nice geometric structure. Other examples such as (open) manifolds that satisfy Conditions (\ref{Char1}), (\ref{Char3}) and (\ref{Char4}) but not Condition (\ref{Char2}) can be found in \cite[Thm.1.3]{GT03}. Define a manifold neighborhood of infinity $N$ in a manifold $M^m$ to be a \emph{homotopy collar} provided $\operatorname{Fr}N \hookrightarrow N$ is a homotopy equivalence. A \emph{pseudo-collar} is a homotopy collar which contains arbitrarily small homotopy collar neighborhoods of infinity. A manifold is \emph{pseudo-collarable} if it contains a pseudo-collar neighborhood of infinity. When $M^m$ is an open manifold (or more generally, a manifold with compact boundary), Guilbault \cite{Gui00} initiated a program to produce a generalization of Siebenmann's collaring theorem. The idea of pseudo-collars and a detailed motivation for the definition are nicely exposited in \cite{Gui00}. Through a series of papers \cite{Gui00, GT03, GT06}, a complete characterzation for pseudo-collarable manifolds with compact boundary was provided. 

\begin{theorem}\cite{GT06}\label{Thm: pseudo compact boundary}
An $m$-manifold $M^m$ ($m\geq 6$) with compact boundary is pseudo-collarable iff each of the
following conditions holds:

\begin{enumerate}[(i)]

\item $M^{m}$ is inward tame

\item \label{Con ii} $M^{m}$ is perfectly $\pi_1$-semistable at infinity,

\item $\sigma_{\infty}(M^{m})\in\underleftarrow{\lim}\left\{
\widetilde{K}_{0}(\pi_{1}(N))\mid N\text{ a clean neighborhood of
infinity}\right\}  $ is zero. \bigskip
\end{enumerate}
\end{theorem}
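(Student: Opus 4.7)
The plan is to suppose $M^m$ pseudo-collarable and extract a cofinal nested sequence $N_0 \supset N_1 \supset N_2 \supset \cdots$ of homotopy collar neighborhoods of infinity. Each $N_i$ deformation retracts onto its compact frontier and so has the homotopy type of a finite complex; this immediately gives inward tameness (i), and because the Wall obstruction of a finite complex is trivial, it also yields $\sigma_\infty(M^m) = 0$ (iii). For condition (ii), I would fix a proper base ray and analyze the bonding homomorphism $\pi_1(N_{i+1}) \to \pi_1(N_i)$ via the van Kampen decomposition $N_i = N_{i+1} \cup_{\operatorname{Fr} N_{i+1}} \overline{N_i \setminus N_{i+1}}$. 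Combined with the fact that $\operatorname{Fr} N_j \hookrightarrow N_j$ is a homotopy equivalence for $j = i, i+1$, this should force the bonding kernel to be generated by loops trivial in $H_1$ of the relevant pieces, and a standard "commutator-in-the-kernel" iteration (passing to a subsequence if necessary) upgrades this to perfectness.

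\textbf{Sufficiency.} The plan is to construct inductively a cofinal nested sequence of homotopy collars. Suppose $N_i$ has been produced; I would then build $N_{i+1} \subset \operatorname{int} N_i$ which is itself a homotopy collar. Using inward tameness together with $\sigma_\infty(M^m) = 0$, I would first select clean neighborhoods of infinity $W \subset \operatorname{int} V \subset \operatorname{int} N_i$ with $V$ of finite homotopy type. Perfect $\pi_1$-semistability, after suitable re-indexing, would arrange the kernel $K$ of $\pi_1(W) \to \pi_1(V)$ to be a perfect normal subgroup, normally generated by finitely many loops $\gamma_1, \ldots, \gamma_k$ that can be pushed into $\operatorname{Fr} W$. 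Using the dimension hypothesis $m \geq 6$, I would then perform a manifold-level plus construction inside the cobordism $V \setminus \operatorname{int} W$: attach 2-handles along the $\gamma_j$ and, using perfectness of $K$ to realize the requisite $H_2$ classes geometrically, attach cancelling 3-handles. The resulting submanifold $N_{i+1}$ would satisfy $\operatorname{Fr} N_{i+1} \hookrightarrow N_{i+1}$ inducing a $\pi_1$-isomorphism and a $\mathbb{Z}[\pi_1]$-homology equivalence, hence a homotopy equivalence by Whitehead's theorem.

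The main obstacle will be executing the manifold plus construction cleanly: the 2-handles must be attached along disjointly embedded loops that normally generate exactly $K$ with compatible framings, the cancelling 3-handles must exist embedded inside the prescribed cobordism $V \setminus \operatorname{int} W$, and every handle move must preserve the clean-neighborhood-of-infinity structure of the outcome. Unlike Theorem~\ref{Th: Completion Theorem}, no $\tau_\infty$ hypothesis is available to exploit, but equally no simple-homotopy-equivalence conclusion is demanded of the output — a homotopy collar requires only $\operatorname{Fr} N \hookrightarrow N$ to be a homotopy equivalence, not a simple one — which is precisely the flexibility that removes the Whitehead torsion condition appearing in the completion theorem. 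Iterating the construction, with the $N_{i+1}$ pushed cofinally toward infinity, would produce the desired pseudo-collar.
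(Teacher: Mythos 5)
The paper does not actually prove Theorem \ref{Thm: pseudo compact boundary}; it quotes it from \cite{GT06} (built on \cite{Gui00} and \cite{GT03}), though it reproduces the key ingredients as Lemma \ref{perfect kernel}, Proposition \ref{Prop: one-sided h cobordism}, and Theorems \ref{Plus construction} and \ref{Embedded plus construction}. Measured against that published argument, your necessity claims for (i) and (iii) are fine, but your route to (ii) has a genuine gap. After decomposing the pseudo-collar into one-sided $h$-cobordisms as in Proposition \ref{Prop: one-sided h cobordism}, what must be shown is that $\pi_1(B)\to\pi_1(W)$ is surjective with \emph{perfect} kernel $K$, i.e.\ $K=[K,K]$. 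Your plan --- show the kernel loops are ``trivial in $H_1$ of the relevant pieces'' and then upgrade by a ``commutator-in-the-kernel iteration'' --- does not work: knowing $K$ dies in $H_1(B)$ only gives $K\subseteq[\pi_1(B),\pi_1(B)]$ (e.g.\ the commutator subgroup of a free group satisfies this and is free, hence not perfect), and no iteration or passage to subsequences converts that into perfectness of $K$ itself. The actual argument (Lemma \ref{perfect kernel} here; Theorem 2.5 of \cite{GT03}) is a Poincar\'e duality computation in the universal cover: $H_k(\tilde W,\hat B;\mathbb{Z})\cong H_c^{n-k}(\tilde W,\tilde A;\mathbb{Z})=0$ since $\tilde A\hookrightarrow\tilde W$ is a proper homotopy equivalence, whence $\hat B=p^{-1}(B)$ is connected (surjectivity) and $H_1(\hat B)=0$, so $\pi_1(\hat B)\cong K$ is perfect. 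This duality-in-the-cover step is the essential idea and is absent from your sketch; van Kampen alone does not supply it.

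On sufficiency there is a second gap of the same character. Attaching $2$-handles along normal generators of $K=\ker(\pi_1(W)\to\pi_1(V))$ and cancelling $3$-handles produces a $\pi_1$-isomorphism $\operatorname{Fr}N_{i+1}\hookrightarrow N_{i+1}$, but it does not make this inclusion a $\mathbb{Z}[\pi_1]$-homology equivalence: the relative homology $H_*(N_{i+1},\operatorname{Fr}N_{i+1};\mathbb{Z}[\pi_1])$ may be nonzero in degrees $\geq 3$, and the plus construction does not touch it. The published proof spends most of its effort first improving arbitrary clean neighborhoods of infinity to generalized $(n-3)$-neighborhoods --- this is where inward tameness, $\sigma_\infty=0$, duality, and handle trading are actually consumed --- and only then uses the embedded plus construction (here Theorem \ref{Embedded plus construction}) to pass to generalized $(n-2)$-neighborhoods, after which duality in the cobordism between frontiers yields the homotopy equivalence; this is exactly the outline recalled in the proof of Theorem \ref{Relativized theorem}. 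Your sketch collapses all of this into a single plus-construction step, leaving the higher relative homology unaccounted for, and also takes for granted the finite normal generation of the kernel needed to invoke the plus construction. (Your closing observation is correct, though: no torsion hypothesis is needed because a homotopy collar requires only a homotopy equivalence, not a simple one.)
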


Just as Theorem \ref{Th: Completion Theorem} is a natural generalization of Siebenmann's dissertation to manifolds with noncompact boundaries, it is natural to extend the study of pseudo-collarability to manifolds with noncompact boundaries. Moreover, since all completable manifolds are pseudo-collarable (a key step in the proof of Theorem \ref{Th: Completion Theorem}), a more general study of pseudo-collarability also generalizes Theorem \ref{Th: Completion Theorem} in the same way that Theorem \ref{Thm: pseudo compact boundary} generalized \cite{Sie65}. Our main result is the following characterization theorem. 

\begin{theorem}
[Pseudo-collarability characterization theorem]\label{Th: Characterization Theorem} An $m$-manifold $M^m$ ($m\geq 6$) is pseudo-collarable iff each of the
following conditions holds:

\begin{enumerate}[(a)]

\item \label{condition a} $M^{m}$ is inward tame

\item \label{condition b} $M^{m}$ is peripherally perfectly $\pi_1$-semistable at infinity,

\item \label{condition c} $\sigma_{\infty}(M^{m})\in\underleftarrow{\lim}\left\{
\widetilde{K}_{0}(\pi_{1}(N))\mid N\text{ a clean neighborhood of
infinity}\right\}  $ is zero. \bigskip
\end{enumerate}
\end{theorem}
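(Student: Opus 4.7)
The plan is to prove the two directions separately, following the parallel structure of Theorem~\ref{Thm: pseudo compact boundary} while adapting the ``peripheral'' techniques developed for Theorem~\ref{Th: Completion Theorem} to accommodate the noncompact boundary. For necessity, assume a pseudo-collar exists, so there is a cofinal sequence $N_1 \supseteq N_2 \supseteq \cdots$ of homotopy collar neighborhoods of infinity. Since $\operatorname{Fr} N_i \hookrightarrow N_i$ is a homotopy equivalence and each $\operatorname{Fr} N_i$ is a manifold with controlled noncompact ends along $\partial M^m$, a standard argument shows $M^m$ is inward tame and the Wall obstruction $\sigma_\infty(M^m)$ vanishes. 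Peripheral perfect $\pi_1$-semistability follows from the fact that the bonding maps in the inverse systems $\{\pi_1(N_i)\}$ and $\{\pi_1(\operatorname{Fr} N_i)\}$ factor through inclusions that are homotopy equivalences, which organizes their kernels into perfect subgroups compatibly with the boundary structure.

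For sufficiency, assume (a), (b), (c) and fix a cofinal sequence $\{N_i\}$ of clean neighborhoods of infinity. The strategy is to inductively modify each $N_i$ to produce a smaller homotopy collar $N_i' \subseteq N_i$, yielding a pseudo-collar in the limit. The first ingredient is to arrange that each $N_i$ has finite homotopy type, which uses inward tameness together with the vanishing of $\sigma_\infty(M^m)$, exactly as in Theorem~\ref{Thm: pseudo compact boundary} and \cite{GG18}. Once $N_i$ is finitely dominated and the Wall obstruction vanishes, handle-trading carried out away from the noncompact portion of $\partial N_i$ makes $N_i$ homotopy equivalent to a finite complex.

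The central step is a manifold-level \emph{perfect plus construction} inside each $N_i$: using hypothesis (\ref{condition b}), one selects a perfect normal subgroup $P_i \trianglelefteq \pi_1(N_{i+1})$ such that $\pi_1(N_{i+1})/P_i \cong \operatorname{image}(\pi_1(N_{i+1}) \to \pi_1(N_i))$, and kills $P_i$ by attaching $2$-handles along nullhomologous loops (in the universal cover) representing generators of $P_i$, then dual $3$-handles to kill the newly created $H_2$-classes. This is the construction pioneered in \cite{Gui00, GT03, GT06}, now carried out in the presence of noncompact boundary. The output is a codimension-$0$ submanifold $N_i'$ whose frontier inclusion is a $\pi_1$-isomorphism; combined with acyclicity of the pair in the appropriate local coefficient system and a Whitehead-theorem argument, this yields the desired homotopy equivalence $\operatorname{Fr} N_i' \hookrightarrow N_i'$.

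The hard part, and the very reason the word ``peripheral'' appears in hypothesis~(\ref{condition b}), is that the $2$- and $3$-handles in the interior must be attached compatibly with a parallel perfect plus construction performed along the noncompact portion of $\partial M^m$. The peripheral hypothesis supplies perfect-kernel data for both the ambient and the boundary inverse systems in a coherent way, so that the two plus constructions can be interleaved. I expect the main technical obstacle to lie not in the algebraic topology of the plus construction itself, but in verifying that these interior and boundary constructions can be executed simultaneously without interfering, and that the resulting submanifold is genuinely a homotopy collar whose iteration produces \emph{arbitrarily small} homotopy collars, as required by the definition of pseudo-collar.
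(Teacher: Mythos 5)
Your overall architecture (two directions, reliance on the plus-construction machinery of Guilbault--Tinsley) is reasonable, but both halves have real problems. In the sufficiency direction, your plan to run an interior plus construction ``interleaved'' with a ``parallel perfect plus construction performed along the noncompact portion of $\partial M^m$'' is not viable as stated: a pseudo-collar must be a clean neighborhood of infinity \emph{inside} $M^m$, so $\partial M^m$ is untouchable --- only frontiers can be moved, and all handle operations must be performed away from $\partial M^m$. You have also misread the role of hypothesis (\ref{condition b}): the peripheral condition is not there to feed a boundary-side construction; it is phrased in terms of the relative systems $\pi_1(\partial_M N^j \cup V_i)$ precisely so that, by Lemma \ref{Lemma: relA pro-pi1 versus Q-A pro-pi1}, it translates into ordinary $1$-endedness plus perfect semistability of $\operatorname{pro}$-$\pi_1$ for the deleted manifold $N^j \backslash \partial M^m$. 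The paper's route is to delete the boundary from each component (Lemmas \ref{Lemma: inward tameness of deleted manifolds} and \ref{Lemma wall obstruction M - A} preserve inward tameness and the vanishing of $\sigma_\infty$), apply a \emph{relativized} version of the GT06 characterization (Theorem \ref{Relativized theorem}) to the resulting one-ended manifold whose boundary is the interior of a compact manifold, and then glue $\partial_M N_i$ back in --- which does not change homotopy types because of the collar structure. Without this reduction (or an equivalent device), your sketch gives no way to cope with the noncompactness of $N_i \cap \partial M^m$ or with the multiple components of $N_i$.

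In the necessity direction, the nontrivial content is exactly the part you wave at with ``compatibly with the boundary structure.'' Peripheral perfect semistability requires perfect semistability of the relative sequence $\pi_1(\partial_M N \cup N_{1,k}^1)$, not of $\pi_1(N_i)$ or of $\pi_1(\operatorname{Fr}N_i)$. Decomposing the pseudo-collar into one-sided $h$-(pre)cobordisms (Proposition \ref{Prop: one-sided h cobordism} together with Lemma \ref{perfect kernel}) does give surjections with perfect kernels between the fundamental groups of successive frontiers, but one must then show that this property survives taking the union with the boundary strips $D_{i,k}^l \subseteq \partial M^m$; this is the content of Lemma \ref{Lemma: perfect kernel}, which needs a separate Seifert--van Kampen/$2$-cell-attachment argument carried out one cobordism at a time and then composed via Lemma \ref{lemma: subsequence preserves perfect semistable}. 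One also needs Proposition \ref{Prop: finite ends} to see that each component is $\partial_M N^j$-connected at infinity before the relative inverse sequence is even defined. Neither step is automatic, and neither appears in your outline.
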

\begin{remark}
It is worth noting that Condition (\ref{condition b}) of Theorem \ref{Th: Characterization Theorem} is strictly weaker than Condition (\ref{Char2}) of Theorem \ref{Th: Completion Theorem}. That is due to the fact that (perfect) $\pi_1$-semistability may not guarantee pro-monomorphism.
For instance, Davis's exotic universal covering spaces constructed in \cite{Dav83} satifies Condition (\ref{condition b}) but not Condition (\ref{Char2}). Furthermore, Condition (\ref{condition b}) reduces to Condition (\ref{Con ii}) of Theorem \ref{Thm: pseudo compact boundary} when boundary $\partial M^m$ is compact. 
\end{remark}

The strategy of our proof is heavily relying on techniques and results developed by several substantial and technical papers \cite{Sie65}, \cite{Gui00, GT03, GT06} and \cite{GG18}. For a full understanding, the readers should be familiar with the Pseudo-collarability Characterization Theorem in \cite{GT06} and the Manifold Completion Theorem in \cite{GG18}. We will not reprove all these results, but our goal is to take shortcuts afforded by both papers, hence, provide an efficient proof of Theorem \ref{Th: Characterization Theorem}.

About the organization of this paper: \S \ref{Section: Terminology} contains basic definitions and notation. \S \ref{Section: Peripheral stability}
provides background materials and technical set-up about neighborhoods of infinity, ends, peripheral perfect semistability condition, etc. \S \ref{Section: finite domination and inward tameness} and \S \ref{Section: Finiteness obstruction} give illustrations for Conditions (\ref{condition a}) and (\ref{condition c}) respectively. \S \ref{Section: Concatenation} sets forth some 
 lemmas. In \S \ref{Section: Proof of Characterization Th: necessity} and \S \ref{Section: Proof of Characerization Theorem: sufficiency}, we prove Theorem \ref{Th: Characterization Theorem}. In the final section of this paper, we discuss some related open questions.

\section{Conventions, notation, and terminology\label{Section: Terminology}}

For convenience, all manifolds are assumed to be piecewise-linear (PL). Equivalent results in the smooth and topological categories may be otained in the usual ways. For instance, some technical issues in smooth category requiring "rounding off corners" or "smoothing corners" have been nicely exposited in \cite{Sie65} and \cite{O'B83}. Unless stated
otherwise, an $m$-manifold $M^{m}$ is permitted to have a boundary, denoted
$\partial M^{m}$. We denote the \emph{manifold interior} by
$\operatorname*{int}M^{m}$. For $A\subseteq M^{m}$, the \emph{point-set
interior} will be denoted $\operatorname*{Int}_{M^{m}}A$ and the
\emph{frontier} by $\operatorname{Fr}_{M^{m}}A$. A \emph{closed manifold} is a compact
boundaryless manifold, while an \emph{open manifold} is a non-compact
boundaryless manifold.

A $q$-dimensional submanifold $Q^{q}\subseteq M^{m}$ is
\emph{properly embedded} if it is a closed subset of $M^{m}$ and $Q^{q}%
\cap\partial M^{m}=\partial Q^{q}$; it is \emph{locally flat} if each
$p\in\operatorname*{int}Q^{q}$ has a neighborhood pair homeomorphic to
$\left(
\mathbb{R}
^{m},%
\mathbb{R}
^{q}\right)  $ and each $p\in\partial Q^{q}$ has a neighborhood pair
homeomorphic to $\left(
\mathbb{R}
_{+}^{m},%
\mathbb{R}
_{+}^{q}\right)  $. By this definition, the only properly embedded codimension
0 submanifolds of $M^{m}$ are unions of its connected components; a more
useful variety of codimension 0 submanifolds are the following: a codimension
0 submanifold $Q^{m}\subseteq M^{m}$ is \emph{clean} if it is a closed subset
of $M^{m}$ and $\operatorname{Fr}_{M}Q^{m}$ is a properly embedded locally flat
(hence, bicollared) $\left(  m-1\right)  $-submanifold of $M^{m}$. In that
case, $\overline{M^{m}\backslash Q^{m}}$ is also clean, and $\operatorname{Fr}_{M}Q^{m}$ is a clean 
codimension 0 submanifold of both $\partial Q^{m}$ and $\partial(\overline{M^{m}\backslash Q^{m}})$.

When the dimension of a manifold or submanifold is clear, we will often omit
the superscript; for example, denoting a clean codimension 0 submanifold
simply by $Q$. Similarly, when the ambient space is clear, we denote
(point-set) interiors and frontiers by $\operatorname*{Int}A$ and $\operatorname{Fr} A$.

For any codimension 0 clean submanifold $Q\subseteq M^{m}$, let $\partial
_{M}Q$ denote $Q\cap\partial M^{m}$ and $\operatorname*{int}_{M}%
Q=Q\cap\operatorname*{int}M^{m}$; alternatively, $\partial_{M}Q=\partial
Q\backslash\operatorname{int}(\operatorname{Fr} Q)$ and $\operatorname*{int}_{M}Q=Q\backslash\partial M^{m}$.
Note that $\operatorname*{int}_{M}Q$ is a $m$-manifold and $\partial\left(
\operatorname*{int}_{M}Q\right)  =\operatorname*{int}\left( \operatorname{Fr} Q\right)
$.

\section{Ends, $\operatorname*{pro}$-$\pi_{1}$, and the peripherally perfectly semistable
condition\label{Section: Peripheral stability}}

Most of definitions and terminologies in this section are based on ones developed in \cite{GG18}. We give a brief review in terms of the topology of ends of manifolds and inverse sequences of groups.

\subsection{Neighborhoods of infinity, partial neighborhoods of infinity, and ends.} Let $M^{m}$ be a connected manifold. A \emph{clean neighborhood of infinity}
in $M^{m}$ is a clean codimension 0 submanifold $N\subseteq M^{m}$ for which
$\overline{M^{m}\backslash N}$ is compact. Equivalently, a clean neighborhood
of infinity is a set of the form $\overline{M^{m}\backslash C}$ where $C$ is a
compact clean codimension 0 submanifold of $M^{m}$. A \emph{clean compact exhaustion} of
$M^{m}$ is a sequence $\left\{  C_{i}\right\}  _{i=1}^{\infty}$ of clean
compact connected codimension 0 submanifolds with $C_{i}\subseteq
\operatorname*{Int}_{M^{m}}C_{i+1}$ and $\cup C_{i}=M^{m}$. By letting
$N_{i}=\overline{M^{m}\backslash C_{i}}$, we obtain the corresponding
\emph{cofinal sequence of clean neighborhoods of infinity}. Each such $N_{i}$
has finitely many components $\left\{  N_{i}^{j}\right\}  _{j=1}^{k_{i}}$. By
enlarging $C_{i}$ to include all of the compact components of $N_{i}$, we can
arrange that each $N_{i}^{j}$ is noncompact; then, by drilling out regular
neighborhoods of arcs connecting the various components of each $\operatorname{Fr}
_{M^{m}}N_{i}^{j}$ (thereby further enlarginging $C_{i}$), we can
arrange that each $\operatorname{Fr}_{M^{m}}N_{i}^{j}$ is connected. An $N_{i}$ with
these latter two properties is called a $0$-neighborhood of infinity. For
convenience, most constructions in this paper will begin with a clean compact
exhaustion of $M^{m}$ with a corresponding cofinal sequence of clean
0-neighborhoods of infinity.

Assuming the above arrangement, an \emph{end }of $M^{m}$ is determined by a
nested sequence of components $\varepsilon=\left(  N_{i}^{k_{i}}\right)
_{i=1}^{\infty}$ of the $N_{i}$; each component is called a \emph{neighborhood
of }$\varepsilon$. In $\S 3.3$, we discuss components $\{N^j\}$ of a neighborhood of infinity
$N$ without reference to a specific end of $M^m$. In that situation, we will refer to the
$N^j$ as \emph{partial neighborhoods of infinity} for $M^m$ (\emph{partial $0$-neighborhoods} if $N$ is a
0-neighborhood of infinity). Clearly every noncompact clean connected codimension
0 submanifold of $M^m$ with compact frontier is a partial neighborhood of infinity with respect to an appropriately chosen compact $C$; if its frontier is connected it is a partial 0-neighborhood of infinity.

\subsection{The fundamental group of an end} For each end $\varepsilon$, we will define the \emph{fundamental group at}
$\varepsilon$; this is done using inverse sequences. Two inverse sequences of
groups and homomorphisms $A_{0}\overset{\alpha_{1}}{\longleftarrow}%
A_{1}\overset{\alpha_{2}}{\longleftarrow}A_{3}\overset{\alpha_{3}%
}{\longleftarrow}\cdots$ and $B_{0}\overset{\beta_{1}}{\longleftarrow}%
B_{1}\overset{_{\beta_{2}}}{\longleftarrow}B_{3}\overset{\beta_{3}%
}{\longleftarrow}\cdots$ are \emph{pro-isomorphic} if they contain
subsequences that fit into a commutative diagram of the form
\begin{equation}
\begin{diagram} A_{i_{0}} & & \lTo^{\lambda_{i_{0}+1,i_{1}}} & & A_{i_{1}} & & \lTo^{\lambda_{i_{1}+1,i_{2}}} & & A_{i_{2}} & & \lTo^{\lambda_{i_{2}+1,i_{3}}}& & A_{i_{3}}& \cdots\\ & \luTo & & \ldTo & & \luTo & & \ldTo & & \luTo & & \ldTo &\\ & & B_{j_{0}} & & \lTo^{\mu_{j_{0}+1,j_{1}}} & & B_{j_{1}} & & \lTo^{\mu_{j_{1}+1,j_{2}}}& & B_{j_{2}} & & \lTo^{\mu_{j_{2}+1,j_{3}}} & & \cdots \end{diagram} \label{basic ladder diagram}%
\end{equation}
An inverse sequence is \emph{stable }if it is pro-isomorphic to a constant
sequence $C\overset{\operatorname*{id}}{\longleftarrow}%
C\overset{\operatorname*{id}}{\longleftarrow}C\overset{\operatorname*{id}%
}{\longleftarrow}\cdots$. Clearly, an inverse sequence is pro-isomorphic to
each of its subsequences; it is stable if and only if it contains a
subsequence for which the images stabilize in the following manner
\begin{equation}
\begin{diagram} A_{0}& & \lTo^{{\lambda}_{1}} & & A_{1} & & \lTo^{{\lambda}_{2}} & & A_{2} & & \lTo^{{\lambda}_{3}} & & A_{3} &\cdots\\ & \luTo & & \ldTo & & \luTo & & \ldTo & & \luTo & & \ldTo & \\ & & \operatorname{Im}\left( \lambda_{1}\right) & & \lTo^{\cong} & & \operatorname{Im}\left( \lambda _{2}\right) & &\lTo^{\cong} & & \operatorname{Im}\left( \lambda_{3}\right) & & \lTo^{\cong} & &\cdots & \\ \end{diagram} \label{Standard stability ladder}%
\end{equation}
where all unlabeled homomorphisms are restrictions or inclusions.

Given an end $\varepsilon=\left(  N_{i}^{k_{i}}\right)  _{i=1}^{\infty}$,
choose a ray $r:[1,\infty)\rightarrow M^{m}$ such that $r\left(
[i,\infty)\right)  \subseteq N_{i}^{k_{i}}$ for each integer $i>0$ and form
the inverse sequence
\begin{equation}
\pi_{1}\left(  N_{1}^{k_{1}},r\left(  1\right)  \right)  \overset{\lambda
_{2}}{\longleftarrow}\pi_{1}\left(  N_{2}^{k_{2}},r\left(  2\right)  \right)
\overset{\lambda_{3}}{\longleftarrow}\pi_{1}\left(  N_{3}^{k_{3}},r\left(
3\right)  \right)  \overset{\lambda_{4}}{\longleftarrow}\cdots
\label{sequence: pro-pi1}%
\end{equation}
where each $\lambda_{i}$ is an inclusion induced homomorphism composed with
the change-of-basepoint isomorphism induced by the path $\left.  r\right\vert
_{\left[  i-1,i\right]  }$. We refer to $r$ as the \emph{base ray} and the
sequence (\ref{sequence: pro-pi1}) as a representative of the
\textquotedblleft fundamental group at $\varepsilon$ based at $r$%
\textquotedblright\ ---denoted $\operatorname*{pro}$-$\pi_{1}\left(
\varepsilon,r\right)  $. We say \emph{the fundamental
group at }$\varepsilon$\emph{ is stable} if (\ref{sequence: pro-pi1}) is a
stable sequence. A nontrivial (but standard) observation is that both semistability and stability of $\varepsilon$ do
not depend on the base ray (or the system of neighborhoods if infinity used to
define it). See \cite{Gui16} or \cite{Geo08}.

If $\{H_i,\mu_i\}$ can be chosen so that each $\mu_i$ is an epimorphism,
we say that our inverse sequence is \emph{semistable} (or \emph{Mittag-Leffler}, or \emph{pro-epimorphic}).
In this case, it can be arranged that the restriction maps in the bottom row of (\ref{basic ladder diagram}) are
epimorphisms. Similarly, if $\{H_i,\mu_i\}$ can be chosen so that each $\mu_i$ is a monomorphism,
we say that our inverse sequence is \emph{pro-monomorphic}; it can then be arranged that the
restriction maps in the bottom row of (\ref{basic ladder diagram}) are monomorphisms. It is easy to see that an
inverse sequence that is semistable and pro-monomorphic is stable.

Recall that a \emph{commutator} element of a group $H$ is an element of the form $x^{-1}y^{-1}xy$
where $x,y \in H$; and the \emph{commutator subgroup} of $H$; denoted $[H,H]$ or $H^{(1)}$, is the subgroup generated by all of its commutators. The group $H$ is \emph{perfect} if $H = [H,H]$. An inverse sequence of groups is \emph{perfectly semistable} if it is pro-isomorphic to an inverse
sequence. 

\begin{equation}
G_0 \xtwoheadleftarrow{\lambda_1} G_1 \xtwoheadleftarrow{\lambda_2}G_2 \xtwoheadleftarrow{\lambda_3}\cdots 
\end{equation}
of finitely generated groups and surjections where each $\ker(\lambda_i)$ perfect. The
following shows that inverse sequences of this type behave well under passage
to subsequences.

\begin{lemma}
\label{lemma: subsequence preserves perfect semistable} A composition of surjective group homomorphisms, each having
perfect kernels, has perfect kernel. Thus, if an inverse sequence of surjective
group homomorphisms has the property that the kernel of each bonding map
is perfect, then each of its subsequences also has this property.
\end{lemma}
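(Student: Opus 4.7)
The plan is to prove the first assertion (composition of two surjections with perfect kernels has perfect kernel), and then deduce the subsequence claim by iterating, since each bonding map of a subsequence is a finite composition of the original bonding maps.

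So let $f:A\twoheadrightarrow B$ and $g:B\twoheadrightarrow C$ be surjective homomorphisms with $\ker f$ and $\ker g$ perfect, and put $K=\ker(g\circ f)$. The first step is to observe that $f|_K$ fits into a short exact sequence
$$1\longrightarrow \ker f \longrightarrow K \xrightarrow{\;f|_K\;} \ker g \longrightarrow 1,$$
where surjectivity of $f|_K$ is immediate: any $y\in\ker g$ has a preimage $x\in A$ under $f$, and then $g(f(x))=g(y)=1$, so $x\in K$.

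To see $K$ is perfect, I would take an arbitrary $x\in K$ and exploit the two perfections in succession. Because $f(x)\in\ker g$ and $\ker g$ is perfect, we can write $f(x)=\prod_i [u_i,v_i]$ with $u_i,v_i\in\ker g$. Using the surjectivity of $f|_K$ just established, lift each $u_i,v_i$ to $\widetilde u_i,\widetilde v_i\in K$. Setting $w=\prod_i [\widetilde u_i,\widetilde v_i]\in [K,K]$, one has $f(w)=f(x)$, so $xw^{-1}\in\ker f$. Since $\ker f$ is perfect, $xw^{-1}$ is a product of commutators in $\ker f\subseteq K$, hence lies in $[K,K]$. Therefore $x=(xw^{-1})\,w\in [K,K]$, proving $K=[K,K]$.

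For the second assertion, given an inverse sequence $\{G_n,\lambda_n\}$ of surjections with perfect kernels and a subsequence $\{G_{n_k}\}$, the new bonding maps have the form $\lambda_{n_{k-1}+1}\circ\cdots\circ\lambda_{n_k}$, and are surjections (as compositions of surjections). A straightforward induction on the length of such a composite, with the binary case above as the inductive step, shows that each resulting composite has perfect kernel, as required. I do not expect any real obstacle: the only substantive point is the lifting step in the middle paragraph, which is forced once surjectivity of $f|_K$ is in place; everything else is formal manipulation.
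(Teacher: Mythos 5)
Your argument is correct and complete: the short exact sequence $1\to\ker f\to K\to\ker g\to 1$ is set up properly, the lifting step is valid, and the conclusion that an extension of a perfect group by a perfect group is perfect is exactly what is needed; the reduction of the subsequence claim to iterated binary compositions is also fine. The paper itself gives no argument here --- it simply cites \cite[Lemma 1]{Gui00} --- and your proof is essentially the standard one found in that reference, so you have in effect supplied the proof the paper outsources.
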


\begin{proof}
See \cite[Lemma 1]{Gui00}.

\end{proof}

\subsection{Relative connectedness, relatively perfectly semistability, and the peripheral perfect semistability condition.}

Let $Q$ be a manifold and $A\subseteq\partial Q$. We say that $Q$ is\emph{
}$A$-\emph{connected at infinity} if $Q$
contains arbitrarily small neighborhoods of infinity $V$ for which $A\cup V$
is connected.

\begin{lemma}\cite[Lemma 4.1]{GG18}
\label{Lemma: relA connected versus Q-A 1-ended}Let $Q$ be a noncompact
manifold and $A$ a clean codimension 0 submanifold of $\partial Q$. Then $Q$
is $A$-connected at infinity if and only
if $Q\backslash A$ is 1-ended.
\end{lemma}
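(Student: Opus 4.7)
The plan is to use an open collar $N\cong A\times[0,1)$ of $A$ in $Q$ (inherited from a collar of $\partial Q$, with $A=A\times\{0\}$) together with the following key observation: for any $W\subseteq Q$, pushing $A$ along $N$ onto a parallel slice $A\times\{s\}\subseteq N\setminus A$ yields a homotopy equivalence $(W\cup N)\setminus A\simeq W\cup N$. In particular these two spaces have the same number of connected components.

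For the forward direction, given a compact $K\subseteq Q\setminus A$ I would first shrink $N$ so that $\overline{N}\cap K=\emptyset$. By $A$-connectedness at infinity, pick a neighborhood of infinity $V$ of $Q$ with $V\cap K=\emptyset$ and $A\cup V$ connected. Then $V\cup N$ is connected, because each component of $N$ contains a component of $A$ which joins to $V$ through the connected set $A\cup V$; the collar equivalence then makes $(V\cup N)\setminus A$ connected. The set $K':=Q\setminus(V\cup N)$ is compact (contained in the compact $Q\setminus V$), disjoint from $A$ (since $A\subseteq N$), and contains $K$, and its complement $(Q\setminus A)\setminus K'=(V\cup N)\setminus A$ is connected. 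This produces, for arbitrary compact $K$, a compact $K'\supseteq K$ in $Q\setminus A$ with connected complement, so $Q\setminus A$ is 1-ended.

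For the backward direction, given a compact $C\subseteq Q$ I would take $K_0:=C\setminus N$, a compact subset of $Q\setminus A$, and invoke 1-endedness to obtain a compact $K'\supseteq K_0$ in $Q\setminus A$ with $U:=(Q\setminus A)\setminus K'$ connected. Because $K'$ lies a positive distance from $A$, $U$ contains a subcollar $A\times(0,\varepsilon)$, so $A\subseteq\overline{U}$ and $A\cup U$ is connected. Setting $V:=(A\cup U)\setminus C$, one checks that $V\cap C=\emptyset$ and $Q\setminus V=C\cup K'$ is compact, so $V$ is a neighborhood of infinity of $Q$. It remains to show that $A\cup V=(A\cup U)\setminus(C\setminus A)$ is connected. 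Since $K'\supseteq C\setminus N$ forces $U\cap C\subseteq N$, removing $C\setminus A$ from $A\cup U$ only cuts the collar region; after further enlarging $K'$ to absorb a suitable compact portion of $C\cap N$ bounded away from $A$, one arranges that the slice $A\times\{s\}$ for some $s<\varepsilon$ lies entirely in $U\setminus C$. This parallel copy of $A$ inside $V$, joined back to $A$ through the collar, bridges any separation of $A\cup V$ that might be caused by $C$.

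The hardest step will be the connectedness verification in $(\Leftarrow)$: since $C$ may meet $A$ and potentially disconnect $A\setminus C$, we cannot work purely inside $Q\setminus A$ as in $(\Rightarrow)$. The fix is the parallel slice $A\times\{s\}\subseteq U\setminus C$ obtained by enlarging $K'$ to absorb the "upper" part of $C\cap N$; this slice exists because $C$ is compact while $A$ is in general noncompact, so $C$ cannot cover an entire horizontal slice of the collar.
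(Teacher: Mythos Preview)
The paper does not give its own proof of this lemma; it is quoted verbatim from \cite[Lemma 4.1]{GG18} with only the citation. So there is nothing in-paper to compare against, and I can only assess your argument on its own merits.

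Your forward direction is essentially correct. The collar retraction really does give a homotopy equivalence $(V\cup N)\setminus A\simeq V\cup N$, so connectedness transfers; and $K'=Q\setminus(V\cup N)$ sits inside the compactum $\overline{Q\setminus V}$ and misses $A$, so (after taking a closure if necessary) it furnishes the required compact set in $Q\setminus A$ with connected complement.

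The backward direction has a genuine gap exactly where you flagged it. Your justification for the existence of a full slice $A\times\{s\}\subseteq U\setminus C$ --- ``$C$ is compact while $A$ is in general noncompact, so $C$ cannot cover an entire horizontal slice'' --- is wrong. A single vertical fibre $\{a_{0}\}\times[0,\tfrac12]\subseteq C$ meets \emph{every} slice $A\times\{s\}$ with $s\le\tfrac12$, and this has nothing to do with whether $A$ is compact. Your proposed remedy of enlarging $K'$ to absorb $C\cap(A\times[\delta,1))$ does not help: the part $C\cap(A\times(0,\delta))$ can still hit all low slices, and it is not compact in $Q\setminus A$, so it cannot be thrown into $K'$. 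Consequently the ``parallel copy of $A$ inside $V$'' you need to bridge a possible separation of $A\cup V$ need not exist.

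A repair that stays within your framework is to \emph{saturate} $C$ in the collar direction before you ever choose $K_{0}$: set $A_{0}=\operatorname{pr}_{A}(C\cap\overline N)$ (compact, since $C$ is) and replace $C$ by $\widehat C=C\cup(A_{0}\times[0,1])$. Then $\widehat C\cap N=A_{0}\times[0,1)$ is a product over a compact base, you take $K_{0}=\widehat C\setminus(A\times[0,\tfrac12))$, and now for the resulting $U$ one has $U\cap\widehat C\subseteq A_{0}\times(0,\tfrac12)$; the connectedness of $A\cup(U\setminus\widehat C)$ can then be argued using the fibres over $A\setminus A_{0}$ (which lie entirely in $U\setminus\widehat C$) together with the connectedness of $A$. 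The neighborhood $V=Q\setminus(K'\cup\widehat C)$ still avoids $C$ since $C\subseteq\widehat C$.
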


If $A\subseteq\partial Q$ and $Q$ is $A$-connected at infinity: let $\left\{  V_{i}\right\}  $ be a cofinal sequence
of clean neighborhoods of infinity for which each $A\cup V_{i}$ is connected;
choose a ray $r:[1,\infty)\rightarrow\operatorname*{Int}Q$ such that $r\left(
[i,\infty)\right)  \subseteq V_{i}$ for each $i>0$; and form the inverse
sequence%
\begin{equation}
\pi_{1}\left(  A\cup V_{1},r\left(  1\right)  \right)  \overset{\mu
_{2}}{\longleftarrow}\pi_{1}\left(  A\cup V_{2},r\left(  2\right)  \right)
\overset{\mu_{3}}{\longleftarrow}\pi_{1}\left(  A\cup V_{3},r\left(  3\right)
\right)  \overset{\mu_{4}}{\longleftarrow}\cdots
\label{sequence: rel A pro-pi1}%
\end{equation}
where bonding homomorphisms are obtained as in (\ref{sequence: pro-pi1}). We
say $Q$ is $A$-\emph{perfectly $\pi_1$-semistable at infinity } (resp. $A$-$\pi_{1}%
$\emph{-stable at infinity}) if (\ref{sequence: rel A pro-pi1}) is perfectly semistable (resp. stable).
Independence of this property from the choices of $\left\{  V_{i}\right\}  $
and $r$ follows from the traditional theory of ends by applying Lemmas
\ref{Lemma: relA connected versus Q-A 1-ended} and
\ref{Lemma: relA pro-pi1 versus Q-A pro-pi1}. Because each boundary component of a manifold with boundary is collared, the following lemma is true because "throwing away" part of the boundary will preserve the homotopy type of the original manifold.

\begin{lemma}\cite[Lemma 4.2]{GG18}
\label{Lemma: relA pro-pi1 versus Q-A pro-pi1}Let $Q$ be a noncompact manifold
and $A$ a clean codimension 0 submanifold of $\partial Q$ for which $Q$ is
$A$-connected at infinity. Then, for any
cofinal sequence of clean neighborhoods of infinity $\left\{  V_{i}\right\}  $
and ray $r:[1,\infty)\rightarrow Q$ as described above, the sequence
(\ref{sequence: rel A pro-pi1}) is pro-isomorphic to any sequence representing
$\operatorname*{pro}$-$\pi_{1}\left(  Q\backslash A,r\right)  $.
\end{lemma}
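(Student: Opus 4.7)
The plan is to use the collar of $A$ in $Q$ to relate the two inverse sequences via a zigzag of homotopy equivalences, then verify compatibility with bonding maps. Since $A$ is a clean codimension $0$ submanifold of $\partial Q$ and boundary components of manifolds are collared, there is a collar $c:A\times[0,1)\hookrightarrow Q$ with $c(a,0)=a$. This collar is the only real tool needed, and the guiding intuition is that $A\cup V_i$ and the corresponding open set $V_i\setminus A$ in $Q\setminus A$ differ only by attaching/removing a collared boundary piece.

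First I would build a candidate cofinal sequence of neighborhoods of infinity for $Q\setminus A$ by setting $W_i:=(V_i\cup c(A\times[0,1)))\setminus A$ (viewed inside $Q\setminus A$). Using the compactness of $\overline{Q\setminus V_i}$ together with Lemma \ref{Lemma: relA connected versus Q-A 1-ended} (which provides that $Q\setminus A$ is $1$-ended under our hypothesis), I would verify that $\{W_i\}$ is cofinal: every compact subset of $Q\setminus A$ is eventually disjoint from $W_i$, and no new end of $Q\setminus A$ is missed because $c(A\times(0,1))\setminus A$ is already included. The ray $r$ lies in $\operatorname{int}(Q)\subseteq Q\setminus A$, and after relabeling the indices one can ensure $r([i,\infty))\subseteq W_i$.

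Next, let $X_i:=V_i\cup c(A\times[0,1))$ viewed in $Q$, so $W_i=X_i\setminus A$ and $A\cup V_i\subseteq X_i$. I claim both inclusions $A\cup V_i\hookrightarrow X_i$ and $W_i\hookrightarrow X_i$ are homotopy equivalences. For the first, the collar supplies a deformation retraction of $X_i$ onto $A\cup V_i$ by linearly collapsing the $[0,1)$ coordinate of $c(A\times[0,1))$ to $0$. For the second, the reparametrization $c(a,t)\mapsto c(a,\tfrac{1+t}{2})$, extended by the identity outside the collar image, gives a deformation of $X_i$ into $W_i$, and this can be promoted to a deformation retraction in the standard way. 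Composing, I obtain homotopy equivalences $W_i\simeq A\cup V_i$ for each $i$, realized by maps that factor through the common enlargement $X_i$.

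Finally, I would check that the ladder of homotopy equivalences commutes, up to basepoint-preserving homotopy along the subsegments of $r$, with the bonding maps $W_{i+1}\hookrightarrow W_i$ and $A\cup V_{i+1}\hookrightarrow A\cup V_i$. This is routine since every map in sight is induced either by an inclusion or by the single fixed collar $c$, so the resulting square diagrams on $\pi_1$ commute. Assembling these into a diagram of the form (\ref{basic ladder diagram}) gives the desired pro-isomorphism with $\{\pi_1(W_i,r(i))\}$; since any other inverse sequence representing $\operatorname{pro}\!-\!\pi_1(Q\setminus A,r)$ is automatically pro-isomorphic to $\{\pi_1(W_i,r(i))\}$ by the standard end-theoretic independence statements, we are done. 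I expect the main obstacle to be verifying cofinality of $\{W_i\}$ in $Q\setminus A$: removing $A$ could in principle create additional ends clustering along $A$, and one must invoke $A$-connectedness at infinity (via Lemma \ref{Lemma: relA connected versus Q-A 1-ended}) and choose the collar carefully to rule this out.
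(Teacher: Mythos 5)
The paper itself does not prove this lemma; it quotes \cite[Lemma 4.2]{GG18} and offers only the heuristic that deleting a collared piece of boundary preserves homotopy type. Your proposal tries to implement exactly that heuristic, which is the right instinct, but as written it has two genuine gaps, and both live precisely where the lemma is actually delicate. The first is fatal to cofinality: every one of your sets $W_i=(V_i\cup c(A\times[0,1)))\setminus A$ contains the \emph{fixed} set $c(A\times(0,1))$, which contains compact subsets of $Q\setminus A$ (for example $c(\{a_0\}\times[1/3,1/2])$). Hence $\bigcap_i W_i\neq\varnothing$, no compact set of that form is ever "eventually disjoint from $W_i$," and $\{W_i\}$ computes nothing about the end of $Q\setminus A$. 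The collar must shrink with $i$, i.e.\ one should use $c(A\times[0,t_i))$ with $t_i\downarrow 0$, so that the $W_i$ close down on $A$ as well as going out toward infinity in $Q$.

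The second gap is the behavior at $\operatorname{Fr}_{\partial Q}A$, which you flag at the end but do not resolve, and which is not cured by $A$-connectedness at infinity (that hypothesis only controls connectivity/one-endedness, not this local issue). With a "straight" collar, $c(A\times[0,t))$ is not a neighborhood of the frontier points of $A$ in $\partial Q$, so the complement $(Q\setminus A)\setminus W_i = Q\setminus(V_i\cup c(A\times[0,t_i)))$ contains points of $\partial Q\setminus A$ accumulating on $\operatorname{Fr}_{\partial Q}A\subseteq A$; its closure in $Q\setminus A$ is then noncompact and $W_i$ is not even a neighborhood of infinity. (Test case: $Q=\mathbb{R}^2_+$, $A=[0,\infty)\times\{0\}$, vertical collar; the points $(-1/n,0)$ lie in the complement and converge to $(0,0)\in A$.) The fix is to choose the collar so that $c(\operatorname{Fr}_{\partial Q}A\times[0,1))$ lies in $\partial Q$, making $c(A\times[0,t])$ an honest closed neighborhood of all of $A$ in $Q$ with frontier $c(A\times\{t\})$. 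The same frontier issue infects your homotopies: "linearly collapse the collar and extend by the identity outside" is discontinuous wherever points outside the collar accumulate on moving points inside it, so the deformation retractions $X_i\searrow A\cup V_i$ and $X_i\searrow W_i$ need either a collar adapted to the $V_i$ (meeting them in products) or a gluing-lemma argument in place of an explicit formula. None of these repairs is deep, but each is a real missing step rather than a routine verification.
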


\begin{remark}
In the above discussion, we allow for the possibility that $A=\varnothing$. In that case, $A$-connected at infinity reduces to 1-endedness and $A$-perfectly $\pi_1$-semistable (resp. $A$-$\pi_{1}$-stability) to ordinary perfectly semistable (resp.  $\pi_{1}$-stability) at that end.
\end{remark}

We can now formulate one of the key definitions of this paper.

\begin{definition} Let $M^m$ be an manifold and $\varepsilon$ be an end of $M^m$
\begin{enumerate}
\item \label{condition1} $M^m$ is \emph{peripherally locally connected at infinity} if it contains arbitrarily small
$0$-neighborhoods of infinity $N$ with the property that each component $N^j$ is $\partial_M N^j$-connected at infinity.
\item $M^m$ \label{condition2} is \emph{peripherally locally connected at} $\varepsilon$ if $\varepsilon$ has arbitrarily small $0$-neighbor-hoods $P$ that are $\partial_M P$-connected at infinity.
\end{enumerate} 
\end{definition}

An $N$ with the property described in Condition (\ref{condition1}) will be called a \emph{strong $0$-neigh-
borhood of infinity} for $M^m$, and a $P$ with the property described in Condition (\ref{condition2})
will be called a \emph{strong $0$-neighborhood of $\varepsilon$}. More generally, any connected partial
0-neighborhood of infinity $Q$ that is $\partial_M Q$-connected at infinity will be called a \emph{strong
partial $0$-neighborhood of infinity}.

\begin{lemma}\cite[Lemma 4.4]{GG18}
$M^m$ is peripherally locally connected at infinity iff $M^m$ is peripherally
locally connected at each of its ends.
\end{lemma}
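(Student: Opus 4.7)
I will prove the two directions separately. The forward direction is straightforward: if $M^m$ is peripherally locally connected at infinity, then for any end $\varepsilon$ and any desired smallness, I pick a correspondingly small strong $0$-neighborhood of infinity $N$; the component $N^j$ of $N$ that is a neighborhood of $\varepsilon$ is a connected partial $0$-neighborhood of infinity with $\operatorname{int}_M N^j$ $1$-ended, hence a strong $0$-neighborhood of $\varepsilon$. Shrinking $N$ produces arbitrarily small such neighborhoods, proving peripheral local connectedness at $\varepsilon$.

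For the converse, take an arbitrary $0$-neighborhood of infinity $N$ with components $N^1,\ldots,N^k$; I will produce a strong $0$-neighborhood $N' \subseteq N$ by working one component at a time. Fix $N^j$. Since $\operatorname{Fr} N^j$ is compact, the end space $\mathcal{E}(N^j)$ (via the Freudenthal compactification of $N^j$) is compact Hausdorff and coincides with the set of ends of $M^m$ contained in $N^j$. By the hypothesis, each $\varepsilon \in \mathcal{E}(N^j)$ has a strong $0$-neighborhood $P_\varepsilon \subseteq N^j$, and the set $\widehat{P_\varepsilon} \subseteq \mathcal{E}(N^j)$ of ends contained in $P_\varepsilon$ is clopen: open because small partial $0$-neighborhoods of any end in $P_\varepsilon$ can be chosen inside $P_\varepsilon$, closed because ends not in $P_\varepsilon$ admit small partial $0$-neighborhoods in the open set $N^j \setminus P_\varepsilon$. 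Compactness of $\mathcal{E}(N^j)$ then furnishes a finite subcover $\widehat{P_{\varepsilon_1}},\ldots,\widehat{P_{\varepsilon_{l_j}}}$.

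The main obstacle is converting this finite overlapping cover into pairwise disjoint strong partial $0$-neighborhoods. My plan is to pass to the clopen partition $\widehat{Q_i} := \widehat{P_{\varepsilon_i}} \setminus \bigcup_{k<i}\widehat{P_{\varepsilon_k}}$ of $\mathcal{E}(N^j)$ and to realize each $\widehat{Q_i}$ as the end-set of a strong partial $0$-neighborhood $Q_i \subseteq P_{\varepsilon_i}$, obtained by excising from $P_{\varepsilon_i}$ finitely many small strong $0$-neighborhoods that together cover the compact clopen $\widehat{P_{\varepsilon_i}} \setminus \widehat{Q_i}$ (the finiteness coming again from compactness). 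Strongness of each $Q_i$ should persist from that of $P_{\varepsilon_i}$ because the excised pockets are bounded relative to the unique end of $\operatorname{int}_M P_{\varepsilon_i}$, so $\operatorname{int}_M Q_i$ remains $1$-ended. The resulting $Q_i$ are pairwise disjoint, their disjoint union has compact complement in $N^j$ (since the $\widehat{Q_i}$ partition $\mathcal{E}(N^j)$), and $N' := \bigcup_{j,i} Q_i \subseteq N$ is a $0$-neighborhood of infinity whose components are exactly the strong partial $0$-neighborhoods $Q_i$, completing the proof.
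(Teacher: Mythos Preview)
The paper does not supply a proof of this lemma; it is quoted verbatim from \cite[Lemma 4.4]{GG18}. So there is no in-paper argument to compare against, and your attempt must be judged on its own.

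Your forward direction is fine. In the converse, however, the key step---preservation of strongness under excision---contains a genuine gap. You assert that after deleting finitely many strong partial $0$-neighborhoods $R_1,\ldots,R_s$ from $P_{\varepsilon_i}$, the remainder $Q_i$ still has $\operatorname{int}_M Q_i$ one-ended, because ``the excised pockets are bounded relative to the unique end of $\operatorname{int}_M P_{\varepsilon_i}$.'' That justification is false: each $R_l$ is a noncompact clean codimension~$0$ submanifold, so $\operatorname{int}_M R_l$ is a noncompact closed subset of $\operatorname{int}_M P_{\varepsilon_i}$ and therefore meets every neighborhood of infinity there. The excised pockets are \emph{not} bounded away from the single end of $\operatorname{int}_M P_{\varepsilon_i}$; on the contrary, they run out to it. Removing such a pocket replaces part of the end with a new noncompact frontier $\operatorname{int}(\operatorname{Fr} R_l)$, and nothing you have written rules out $\operatorname{int}_M Q_i$ acquiring additional ends along these cuts.

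There are also smaller issues you have glossed over: your $Q_i$ need not have connected frontier (so need not be partial $0$-neighborhoods without further arc-drilling), and making the clopen end-sets $\widehat{Q_i}$ disjoint does not by itself make the manifold pieces $Q_i$ pairwise disjoint---you would still have to trim the finite parts. These are repairable, but the strongness-under-excision claim is the real obstacle, and it needs an actual argument (or a different construction that avoids the excision step altogether).
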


In the next section, one will see that \textbf{every} inward tame manifold $M^{m}$ is peripherally locally connected at infinity. As a consequence, that condition plays less prominent role than the next definition.

\begin{definition}Let $M^m$ be a manifold and $\varepsilon$ an end of $M^m$.
\begin{enumerate}
\item $M^m$ is \emph{peripherally perfectly $\pi_1$-semistable at infinity} if it contains arbitrarily small strong
0-neighborhoods of infinity $N$ with the property that each component $N^j$ is $\partial_M N^j$-perfectly $\pi_1$-semistable at infinity.

\item $M^m$ is \emph{peripherally perfectly $\pi_1$-semistable at $\varepsilon$} if $\varepsilon$ has arbitrarily small strong $0$-neighborhoods $P$ that are $\partial_M P$-perfectly $\pi_1$-semistable at infinity.
\end{enumerate}
 \end{definition}

If $M^m$ contains arbitrarily small 0-neighborhoods of infinity $N$ with
the property that each component $N^j$ is $\partial_M N^j$-perfectly semistable at infinity, then those components provide arbitrarily small neighborhoods of the ends satisfying the necessary perfectly semistable condition. Thus, it's easy to see peripheral perfect $\pi_1$-semistability at infinity implies peripheral perfect $\pi_1$-semistability at each end.

\section{Finite domination and inward
tameness\label{Section: finite domination and inward tameness}}

A topological space $P$ is \emph{finitely dominated} if there exists a finite
polyhedron $K$ and maps $u:P\rightarrow K$ and $d:K\rightarrow P$ such that
$d\circ u\simeq\operatorname*{id}_{P}$. If choices can be made so that $d\circ
u\simeq\operatorname*{id}_{P}$ and $u\circ d\simeq\operatorname*{id}_{K}$,
i.e, $P\simeq K$, we say that $P$ \emph{has finite homotopy type}. For
convenience we will restrict our our attention to cases where $P$ is a locally
finite polyhedron---a class that contains the (PL) manifolds and submanifolds
(and certain subspaces of these) under consideration in this paper.

A locally finite polyhedron $P$ is \emph{inward tame} if it contains
arbitrarily small polyhedral neighborhoods of infinity that are finitely
dominated. Equivalently, $P$ contains a cofinal sequence $\left\{
N_{i}\right\}  $ of closed polyhedral neighborhoods of infinity each admitting
a \textquotedblleft taming homotopy\textquotedblright\ $H:N_{i}\times\left[
0,1\right]  \rightarrow N_{i}$ that pulls $N_{i}$ into a compact subset of
itself. By an application of the Homotopy Extension Property, we may require
taming homotopies to be fixed on $\operatorname{Fr} N_{i}$. From there, it is easy to see
that, in an inward tame polyhedron, \emph{every} closed neighborhood of
infinity admits a taming homotopy.\footnote{A discussion of the
\textquotedblleft inward tame\textquotedblright\ terminology can be found in \cite{Gui16}.}

\begin{lemma}\cite[Lemma 5.3]{GG18}
\label{Lemma: inward tameness of deleted manifolds}Let $M^{m}$ be a manifold
and $A$ a clean codimension 0 submanifold of $\partial M^{m}$. If $M^{m}$ is
inward tame then so is $M^{m}\backslash A$.
\end{lemma}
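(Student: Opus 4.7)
The plan is to verify inward tameness of $M^{m}\setminus A$ directly: show that every clean polyhedral neighborhood of infinity $N_{0}\subseteq M^{m}\setminus A$ contains a finitely dominated one. The strategy is to realize $N_{0}$ as coming from a neighborhood of infinity in $M^{m}$ with the boundary submanifold $A$ excised, and then transfer finite domination back across the excision via a collar-based homotopy equivalence.

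First, let $K_{0}=\overline{(M^{m}\setminus A)\setminus N_{0}}$, which is compact in $M^{m}\setminus A$. Since $A$ is closed in $M^{m}$ and $K_{0}$ is compact and disjoint from $A$, the two are separated by a positive distance. The plan is to choose a compact, clean, codimension-$0$ PL submanifold $C\subseteq M^{m}$ with $K_{0}\subseteq \operatorname{Int}_{M} C$ and $C\cap A=\varnothing$, for instance by taking a PL regular neighborhood of $K_{0}$ inside $M^{m}\setminus A$. Set $N_{M}=\overline{M^{m}\setminus C}$; this is a clean neighborhood of infinity in $M^{m}$, hence finitely dominated by hypothesis. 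Because $C\cap A=\varnothing$ we have $A\subseteq N_{M}$. Define $N:=N_{M}\setminus A\subseteq M^{m}\setminus A$. A short check shows $N$ is a clean polyhedral neighborhood of infinity in $M^{m}\setminus A$ with complement-closure $C$, and the containment $K_{0}\subseteq \operatorname{Int}_{M} C$ forces $N\subseteq N_{0}$.

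It remains to establish that $N$ is finitely dominated. Fix a closed collar $\phi:\partial M^{m}\times[0,1]\hookrightarrow M^{m}$; after rescaling the collar parameter we may assume $\phi(A\times[0,1])\subseteq N_{M}$, since $A$ is positively separated from $C$. Two collar-based deformation retractions then yield
\[
N_{M}\setminus A \;\simeq\; N_{M}\setminus \phi(A\times[0,1/2)) \;\simeq\; N_{M},
\]
the first by pushing $\phi(A\times(0,1/2))$ outward to $\phi(A\times\{1/2\})$, the second by pushing $\phi(A\times[0,1/2])$ analogously. Since finite domination is a homotopy invariant, $N\simeq N_{M}$ is finitely dominated, and we are done.

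The main technicality is handling corners along $\operatorname{Fr}_{\partial M^{m}}A$ when $A$ is a proper clean submanifold of $\partial M^{m}$: the collar $\phi$ must be chosen compatibly with the decomposition $\partial M^{m}=A\cup\overline{\partial M^{m}\setminus A}$ so that the displayed retractions are honest PL maps, and $C$ must be constructed to match. This is routine and mirrors corner-smoothing setups already used in \cite{Sie65} and \cite{GG18}, but it is the only subtlety in the argument.
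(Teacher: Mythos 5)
Your argument is correct and is essentially the intended one: the paper offers no proof of this lemma (it is quoted from \cite[Lemma 5.3]{GG18}), but the mechanism you use---excise a compact clean $C$ disjoint from $A$, then observe that deleting the clean codimension 0 piece $A$ of the boundary does not change the homotopy type of $N_{M}=\overline{M^{m}\setminus C}$ because the boundary is collared---is exactly the principle the paper itself invokes for the companion Lemma~\ref{Lemma: relA pro-pi1 versus Q-A pro-pi1}. The only adjustment I would make is to your two displayed retractions: pushing only $\phi(A\times[0,1/2))$ outward is discontinuous at points $\phi(x,t)$ with $x\in\operatorname{Fr}_{\partial M^{m}}A$, and the cleanest repair is to take a collar $c$ of the \emph{entire} boundary $\partial N_{M}$ inside $N_{M}$ and push inward via $c(x,t)\mapsto c(x,\max(t,1/2))$, which deformation retracts both $N_{M}$ and $N_{M}\setminus A$ onto the same subset $N_{M}\setminus c\left(\partial N_{M}\times[0,1/2)\right)$ and also sidesteps the corner issue you flag at the end.
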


It is easy to see that a finitely dominated space $P$ has finitely generated
homology. The following result is vital to this paper.

\begin{proposition}\cite[Prop.5.4]{GG18}
\label{Prop: finite ends}
If a noncompact connected manifold $M^m$ and its boundary each
have finitely generated homology, then $M^m$ has finitely many ends. More specifically,
the number of ends of $M^m$ is bounded above by $\dim H_{m-1}(M^m, \partial M^m;$ $ \mathbb{Z}_2) + 1$.
\end{proposition}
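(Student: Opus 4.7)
\emph{Proof plan.} My strategy is to bound the number of ends by showing that each end beyond the first contributes a $\mathbb{Z}_2$-linearly independent class to $H_{m-1}(M^m,\partial M^m;\mathbb{Z}_2)$, with independence detected by mod-$2$ transverse intersection against a proper bi-infinite PL arc.

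First, I check that $H_{m-1}(M^m,\partial M^m;\mathbb{Z}_2)$ has finite $\mathbb{Z}_2$-dimension $d$. Universal coefficients preserves finite generation when passing from $\mathbb{Z}$ to $\mathbb{Z}_2$; then in the exact segment
\[
H_{m-1}(M^m;\mathbb{Z}_2)\longrightarrow H_{m-1}(M^m,\partial M^m;\mathbb{Z}_2)\longrightarrow H_{m-2}(\partial M^m;\mathbb{Z}_2)
\]
of the long exact sequence of the pair, both outer terms are finite-dimensional $\mathbb{Z}_2$-vector spaces, so the middle term is as well. Suppose, toward contradiction, that $M^m$ has at least $k=d+2$ ends. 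Using the end framework of \S2, select a clean compact connected codimension-$0$ submanifold $C\subseteq M^m$ (absorbing any compact complementary components) whose complement has $k$ noncompact components $N_1,\ldots,N_k$. Each $F_j:=\operatorname{Fr}_{M^m}N_j$ is a compact properly embedded bicollared $(m-1)$-submanifold with $\partial F_j\subseteq\partial M^m$, carrying a $\mathbb{Z}_2$-fundamental class $\alpha_j:=[F_j]\in H_{m-1}(M^m,\partial M^m;\mathbb{Z}_2)$.

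For each $i\in\{1,\ldots,k-1\}$, I construct a proper PL arc $\rho_i\colon\mathbb{R}\to\operatorname{int}M^m$ escaping to the end represented by $N_k$ as $t\to-\infty$ and to the end represented by $N_i$ as $t\to+\infty$. Using PL general position and cancellation across the bicollared $F_j$'s, arrange $\rho_i$ to meet $\bigcup_j F_j$ transversely in exactly two points---one on $F_i$ and one on $F_k$---so that $\rho_i\cdot F_j\equiv\delta_{ij}\pmod 2$ for every $j\in\{1,\ldots,k-1\}$. Now suppose $\sum_{i\in S}\alpha_i=0$ in $H_{m-1}(M^m,\partial M^m;\mathbb{Z}_2)$ for some nonempty $S\subseteq\{1,\ldots,k-1\}$, and represent the nullhomology by a compact PL $m$-submanifold $E\subseteq M^m$ (obtained as a regular-neighborhood thickening of a chain representative) with $\partial E=\bigcup_{i\in S}F_i\cup D$, where $D\subseteq\partial M^m$ is compact. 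After perturbing $\rho_i$ transverse to $E$, the preimage $\rho_i^{-1}(E)$ is a compact $1$-manifold with boundary---a disjoint union of closed intervals---so $\rho_i^{-1}(\partial E)$ contains an even number of points. Since $\rho_i$ avoids $\partial M^m$, this gives $\sum_{j\in S}\rho_i\cdot F_j\equiv 0\pmod 2$; but for any $i\in S$ (and since $k\notin S$) the same sum equals $\rho_i\cdot F_i=1$, a contradiction. Hence $\alpha_1,\ldots,\alpha_{k-1}$ are $\mathbb{Z}_2$-independent, so $k-1\le d$, contrary to $k=d+2$.

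The main obstacle is the PL representability step: realizing a $\mathbb{Z}_2$-chain nullhomology as an honest compact PL $m$-submanifold with boundary of the prescribed form is standard PL technology, but it is the one place the argument goes beyond elementary diagram chasing. A tidier packaged alternative uses Poincar\'e--Lefschetz duality $H_{m-1}(M^m,\partial M^m;\mathbb{Z}_2)\cong H^1_c(M^m;\mathbb{Z}_2)$ together with the classical inequality $\#\mathrm{ends}(M^m)\le 1+\dim H^1_c(M^m;\mathbb{Z}_2)$.
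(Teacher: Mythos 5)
Your argument is essentially correct, but note that the paper itself offers no proof of this proposition --- it is imported verbatim from \cite[Prop.~5.4]{GG18} --- so the comparison is necessarily with that source, whose argument is homological rather than by explicit transverse arcs: for a clean compact connected $C$ with unbounded complementary components $N_1,\dots,N_k$, one checks that each frontier class $[F_j]$ is nonzero (using $H_m(N_j,\partial N_j;\mathbb{Z}_2)=0$ for noncompact $N_j$) and that the kernel of $\bigoplus_j\langle[F_j]\rangle\to H_{m-1}(M^m,\partial M^m;\mathbb{Z}_2)$ is at most one-dimensional because, by the exact sequence of a triple and excision, it factors through $H_m(C,\partial C;\mathbb{Z}_2)\cong\mathbb{Z}_2$; this yields $k-1\le d$ with no general position at all. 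Your route instead detects independence by mod-$2$ intersection with proper lines joining pairs of ends; it is more geometric and makes the source of the ``$+1$'' (the single relation $\sum_j[F_j]=0$) vivid, at the price of the representability step you flag. That step is the one soft spot: the support of a mod-$2$ $m$-chain realizing the nullhomology need not be a submanifold with the stated boundary (it can have codimension-$\ge 2$ singularities), so ``regular-neighborhood thickening'' is not quite the right fix; but the parity count survives if you work directly with the simplicial support of the chain --- a generic arc transverse to the triangulation enters or leaves the support exactly at $(m-1)$-faces carried with odd coefficient by its boundary, and in $\operatorname{int}M^m$ those faces lie only on the $F_j$ with $j\in S$ --- or if you replace the whole step by the well-defined pairing of $H^1_c(M^m;\mathbb{Z}_2)$ with locally finite $1$-cycles. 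Your closing ``packaged alternative,'' via $H_{m-1}(M^m,\partial M^m;\mathbb{Z}_2)\cong H^1_c(M^m;\mathbb{Z}_2)$ and the end-cohomology exact sequence $H^0(M^m;\mathbb{Z}_2)\to H^0_e(M^m;\mathbb{Z}_2)\to H^1_c(M^m;\mathbb{Z}_2)$, is also correct and is the cleanest way to record the bound; either version is an acceptable substitute for the citation.
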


\begin{corollary}\cite[Cor.5.5]{GG18}
\label{Cor: inward tame implies locally connected}
If $M^m$ is inward tame, then $M^m$ is peripherally locally connected at infinity.
\end{corollary}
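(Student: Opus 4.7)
My plan is to build, for any desired smallness, a $0$-neighborhood of infinity $N \subseteq M^m$ whose every component $N^j$ has the property that $N^j \setminus \partial_M N^j$ is $1$-ended; by Lemma~\ref{Lemma: relA connected versus Q-A 1-ended} this is exactly the condition that each $N^j$ be $\partial_M N^j$-connected at infinity. The strategy is first to show that $N^j \setminus \partial_M N^j$ always has finitely many ends (using Proposition~\ref{Prop: finite ends}), and then to exploit this finiteness to refine a given compact exhaustion into one producing the desired $0$-neighborhoods.

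For the finiteness of ends, I would take a $0$-neighborhood $N$ of infinity in $M^m$ with component $N^j$. Inward tameness forces $N^j$ to be finitely dominated, hence $N^j$ has finitely generated homology. Since $\partial_M N^j$ is a clean codimension $0$ submanifold of $\partial N^j$ and $\partial M^m$ is collared in $M^m$, the inclusion $N^j \setminus \partial_M N^j \hookrightarrow N^j$ is a homotopy equivalence, so $N^j \setminus \partial_M N^j$ has finitely generated homology as well. Its manifold boundary is $\operatorname{int}(\operatorname{Fr} N^j)$, which is homotopy equivalent to the compact manifold $\operatorname{Fr} N^j$ and therefore also has finitely generated homology. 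Proposition~\ref{Prop: finite ends} then yields finitely many ends for $N^j \setminus \partial_M N^j$.

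Separately, I would apply the same reasoning globally to $\operatorname{int} M^m$. Writing $M^m = C \cup N$ with $C$ compact and $N$ a finitely dominated $0$-neighborhood of infinity, and noting that $C \cap N = \operatorname{Fr} N$ is compact, a Mayer--Vietoris argument shows $M^m$ has finitely generated homology; via the collar of $\partial M^m$, so does $\operatorname{int} M^m$. Since $\operatorname{int} M^m$ is boundaryless, Proposition~\ref{Prop: finite ends} forces it to have only finitely many ends.

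Finally, I would combine the two finiteness statements to carry out the refinement. Starting from a cofinal sequence of $0$-neighborhoods $\{N_i\}$ and an offending component $N_i^j$ whose interior part has $k > 1$ ends, I would enlarge $C_i$ inside $N_i^j$ by adding a compact clean codimension $0$ submanifold whose frontier separates those ends in $N_i^j \setminus \partial_M N_i^j$, then restore the $0$-neighborhood structure (noncompact components with connected frontiers) by the standard drilling/filling moves from \S\ref{Section: Peripheral stability}. The main obstacle is that two ends of $N_i^j \setminus \partial_M N_i^j$ may be joined through $\partial_M N_i^j$ inside $M^m$, so the separating compactum cannot be chosen entirely in $\operatorname{int} M^m$; it must be arranged to meet $\partial M^m$ carefully so that the resulting new pieces are genuine noncompact clean submanifolds with connected frontiers. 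The finiteness above ensures only finitely many enlargements are required at each stage, producing the sought cofinal sequence of strong $0$-neighborhoods.
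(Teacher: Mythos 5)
Your route is essentially the paper's: the whole content of the corollary is Proposition~\ref{Prop: finite ends} applied to $N^j\setminus\partial_M N^j$ (which is homotopy equivalent to the finitely dominated $N^j$ and has manifold boundary $\operatorname{int}(\operatorname{Fr}N^j)\simeq\operatorname{Fr}N^j$), followed by an enlargement of the compact core that separates the finitely many ends. Your second paragraph carries out the first half correctly. However, the one step you leave genuinely open --- the ``main obstacle'' that two ends of $N^j\setminus\partial_M N^j$ might be rejoined through $\partial_M N^j$, so that the separating compactum ``must be arranged to meet $\partial M^m$ carefully'' --- is where your argument is incomplete as written, and the obstacle is in fact illusory. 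The observation that dissolves it: for any \emph{connected} clean codimension $0$ submanifold $P\subseteq M^m$, the set $P\setminus\partial_M P$ is connected, since it is open in $P$ and contains the connected dense subset $\operatorname{int}P$. Consequently, if $K\subseteq N^j\setminus\partial_M N^j$ is a compactum separating the ends and $D$ is a clean compact codimension $0$ neighborhood of $K$ disjoint from $\partial M^m$, then for any component $P$ of $\overline{N^j\setminus D}$ the connected set $P\setminus\partial_M P$ is open and closed in $(N^j\setminus\partial_M N^j)\setminus\operatorname{Int}D$, hence equals a single component $U$ of that set; when $U$ is unbounded it is $1$-ended, so $P$ is $\partial_M P$-connected at infinity by Lemma~\ref{Lemma: relA connected versus Q-A 1-ended}. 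Informally: a path in $P$ joining two distinct $U$'s could be pushed off $\partial M^m$ via the boundary collar, contradicting their being different components. So the compactum can (and should) be chosen entirely inside $N^j\setminus\partial_M N^j$, and no careful interaction with $\partial M^m$ is needed. The subsequent normalizations (absorbing compact components into the core, drilling arcs chosen in $\operatorname{int}_M P$ to connect frontier components) remove only compacta of codimension $0$ with codimension $\geq 2$ cores away from $\partial M^m$, so they disturb neither connectivity nor the end count.

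Two smaller remarks. Your third paragraph, establishing that $\operatorname{int}M^m$ has finitely many ends, is correct but plays no role: at each stage there are only finitely many components $N_i^j$, and each requires only one compact separator, so the per-component finiteness already suffices. Also, when invoking Proposition~\ref{Prop: finite ends} you should note explicitly that $N^j\setminus\partial_M N^j$ is connected and noncompact; both follow from the same density-of-$\operatorname{int}N^j$ observation used above.
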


\section{Finite homotopy type and the $\sigma_{\infty}$%
-obstruction\label{Section: Finiteness obstruction}}

Finitely generated projective left $\Lambda$-modules $P$ and $Q$ are
\emph{stably equivalent} if there exist finitely generated free $\Lambda
$-modules $F_{1\text{ }}$ and $F_{2}$ such that $P\oplus F_{1}\cong Q\oplus
F_{2}$. Under the operation of direct sum, the stable equivalence classes of
finitely generated projective modules form a group $\widetilde{K}_{0}\left(
\Lambda\right)  $, the \emph{reduced projective class group} of $\Lambda$. In
\cite{Wal65}, Wall asssociated to each path connected finitely dominated space
$P$ a well-defined $\sigma\left(  P\right)  \in\widetilde{K}_{0}\left(
\mathbb{Z}
\lbrack\pi_{1}\left(  P\right)  ]\right)  $ which is trivial if and only if
$P$ has finite homotopy type.\footnote{Here $%
\mathbb{Z}
\lbrack\pi_{1}\left(  P\right)  ]$ denotes the integral group ring
corresponding to the group $\pi_{1}\left(  P\right)  $. In the literature,
$\widetilde{K}_{0}\left(
\mathbb{Z}
\lbrack G]\right)  $ is sometimes abbreviated to $\widetilde{K}_{0}\left(
G\right)  $.} As one of his necessary and sufficient conditions for
completability of a 1-ended inward tame open manifold $M^{m}$ ($m>5$) with
stable $\operatorname*{pro}$-$\pi_{1}$, Siebenmann defined the \emph{end
obstruction} $\sigma_{\infty}\left(  M^{m}\right)  $ to be (up to sign) the
finiteness obstruction $\sigma\left(  N\right)  $ of an arbitrary clean
neighborhood of infinity $N$ whose fundamental group \textquotedblleft
matches\textquotedblright\ $\operatorname*{pro}$-$\pi_{1}\left(
\varepsilon\left(  M^{m}\right)  \right)  $.

In cases where $M^{m}$ is multi-ended or has non-stable $\operatorname*{pro}%
$-$\pi_{1}$ (or both), a more general definition of $\sigma_{\infty}\left(
M^{m}\right)  $, introduced in \cite{CS76}, is required. Here we inherit the definition discussed in \cite{GG18}. For inward tame finitely dominated locally finite polyhedron $P$ (or more
generally locally compact ANR), let $\left\{ N_{i}\right\}$ \footnote{Each $N_i$ can be non-path-connected. However, inward tameness assures that each $N_i$ has finitely many components --- each finitely dominated. See \cite{GG18} for the details about defining the functor $\tilde{K}_0$ and the finiteness obstruction in this situation.} be a nested
cofinal sequence of closed polyhedral neighborhoods of infinity and define%
\[
\sigma_{\infty}\left(  P\right)  =\left(  \sigma\left(  N_{1}\right)
,\sigma\left(  N_{2}\right)  ,\sigma\left(  N_{3}\right)  ,\cdots\right)
\in\underleftarrow{\lim}\left\{  \widetilde{K}_{0}[%
\mathbb{Z}
\lbrack\pi_{1}(N_{j})]\right\}
\]
The bonding maps of the target inverse sequence%
\[
\widetilde{K}_{0}[%
\mathbb{Z}
\lbrack\pi_{1}(N_{1})]\leftarrow\widetilde{K}_{0}[%
\mathbb{Z}
\lbrack\pi_{1}(N_{2})]\leftarrow\widetilde{K}_{0}[%
\mathbb{Z}
\lbrack\pi_{1}(N_{3})]\leftarrow\cdots
\]
are induced by inclusion maps, with the Sum Theorem for finiteness obtructions
\cite[Th.6.5]{Sie65} assuring consistency. Clearly, $\sigma_{\infty}\left(
P\right)  $ vanishes if and only if each $N_{i}$ has finite homotopy type;
by another application of the Sum Theorem, this happens if and only if
\emph{every} closed polyhedral neighborhood of infinity has finite homotopy type.

We close this section by quoting a result from \cite[Lemma 6.1]{GG18}, which plays a key role in proving Theorem \ref{Th: Characterization Theorem}.

\begin{lemma}\label{Lemma wall obstruction M - A}
Let $M^m$ be a manifold and $A$ a clean codimension $0$ submanifold of
$\partial M^m$. If $M^m$ is inward tame and $\sigma_{\infty}(M^m)$ vanishes, then $M^m \backslash A$ is inward tame
and $\sigma_{\infty} (M^m \backslash A)$ also vanishes.
\end{lemma}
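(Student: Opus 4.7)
The inward tameness of $M^m \setminus A$ is immediate from \lemref{Lemma: inward tameness of deleted manifolds}, so the real content lies in proving $\sigma_\infty(M^m \setminus A) = 0$. My plan is to exploit a collar of $A$ in $M^m$ to match closed polyhedral neighborhoods of infinity of $W := M^m \setminus A$ with those of $M^m$, then apply the Sum Theorem for finiteness obstructions (\cite[Th.~6.5]{Sie65}) to transfer the vanishing of $\sigma$ from one side to the other.

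Concretely, I would fix a collar $c\colon A \times [0,1] \hookrightarrow M^m$ and choose a cofinal sequence $\{N_i\}$ of closed polyhedral neighborhoods of infinity of $M^m$, arranged by standard general position to be collar-compatible: $N_i \cap c(A\times[0,1]) = c((A \cap N_i)\times[0,1])$. By hypothesis each $\sigma(N_i)=0$. Set
\[ M_i' := M^m \setminus c(A\times[0,1/(2i))), \qquad V_i := N_i \cap M_i', \qquad B_i := c((A\cap N_i)\times[0,1/(2i)]). \]
The collar yields a deformation retraction $r_i \colon W \to M_i'$, and one checks that the preimages $r_i^{-1}(V_i) \simeq V_i$ form a cofinal sequence of closed neighborhoods of infinity of $W$. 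The decomposition $N_i = V_i \cup B_i$ is clean, with intersection $V_i \cap B_i = c((A\cap N_i)\times\{1/(2i)\})$, which is a deformation retract of $B_i$. The Sum Theorem then gives
\[ \sigma(N_i) = \sigma(V_i) + \sigma(B_i) - \sigma(V_i \cap B_i), \]
and since $B_i \simeq V_i \cap B_i$, the last two terms cancel under inclusion into $\widetilde{K}_0(\mathbb{Z}[\pi_1(N_i)])$. Therefore $\sigma(V_i) = \sigma(N_i) = 0$, so each $r_i^{-1}(V_i)$ has finite homotopy type, giving $\sigma_\infty(W)=0$.

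The main obstacle is justifying finite domination of $B_i$ and $V_i \cap B_i$, as required by the Sum Theorem. Both are homotopy equivalent to $A \cap N_i$, a neighborhood of infinity of the manifold $A$, so the issue reduces to showing $A$ is itself inward tame. I would obtain this by first passing a taming homotopy on $M^m$ to $\partial M^m$ (adjusting it to preserve a boundary collar), and then, using that $A$ is a clean codim 0 submanifold of the boundaryless $\partial M^m$, further restricting inside a bicollar of $\mathrm{Fr}_{\partial M^m} A$ to recover a taming homotopy on $A$. The remaining points---general positioning of $N_i$ to be collar-compatible, and verifying cofinality together with the homotopy equivalence $r_i^{-1}(V_i) \simeq V_i$---are routine bookkeeping in the style of \cite{GG18}.
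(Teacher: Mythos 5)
The proposal contains a fatal misidentification of the neighborhoods of infinity of $W=M^m\setminus A$. With your collar-compatible setup one computes
\[
r_i^{-1}(V_i)=V_i\cup c\bigl((A\cap N_i)\times(0,1/(2i))\bigr)=N_i\setminus A ,
\]
and $N_i\setminus A$ is \emph{not} a closed neighborhood of infinity of $W$: its complement in $W$ is $(M^m\setminus N_i)\setminus A$, whose closure in $W$ equals $\overline{M^m\setminus N_i}\setminus A$, and this is noncompact as soon as the compact core $C_i=\overline{M^m\setminus N_i}$ meets $A$ (which happens for all large $i$ whenever $A\neq\varnothing$). Deleting $A$ creates new end behavior along $A\cap C_i$, and your sets simply miss it. A minimal example: $M=D^{m-1}\times[0,\infty)$ with $A=D^{m-1}\times\{0\}$ gives $W\approx D^{m-1}\times\mathbb{R}$, which has \emph{two} ends, while your $r_i^{-1}(V_i)=D^{m-1}\times[i,\infty)$ sees only one. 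Consequently your Sum Theorem computation, which decomposes $N_i$ as $V_i\cup B_i$, only re-derives $\sigma(N_i)=0$ for a space $V_i$ that is in fact homeomorphic to $N_i$ (it is $N_i$ with an open boundary collar shaved off); it says nothing about the obstruction of an actual neighborhood of infinity of $W$.

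A correct argument must work with the genuine cofinal neighborhoods of infinity of $W$, which have the form $U_i=(N_i\setminus A)\cup c\bigl(A\times(0,\epsilon_i]\bigr)$ --- note that the half-open collar runs over \emph{all} of $A$, including the part inside $C_i$. Up to homotopy $U_i$ is the pushout $N_i\cup_{A\cap N_i}A$, so the Sum Theorem gives $\sigma(U_i)=j_*\sigma(N_i)+j_*\sigma(A)-j_*\sigma(A\cap N_i)$, and the real work is to show that $A$ and $A\cap N_i$ are finitely dominated (via inward tameness of $\partial M^m$, which you do gesture at) and that $j_*\sigma(A)=j_*\sigma(A\cap N_i)$, e.g.\ by splitting $A=(A\cap C_i)\cup(A\cap N_i)$ along the compact set $A\cap\operatorname{Fr}C_i$. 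Your proposal never confronts $\sigma(A)$ at all, which is precisely where the content of \cite[Lemma 6.1]{GG18} lies. (For what it is worth, the present paper does not reprove this lemma but quotes it from \cite{GG18}, so the comparison here is with the cited source rather than with an in-text argument.)
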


\section{Concatenation of  one-sided h-precobordisms \label{Section: Concatenation}}
Recall that an (absolute) cobordism is a triple $(W,A,B)$, where $W$ is a manifold with boundary and $A$ and $B$ are disjoint manifolds without boundary for which $A \cup B = \partial W$. The triple $(W,A,B)$ is a \emph{relative cobordism} if $A$ and $B$ are disjoint codimension 0 clean submanifolds of $\partial W$. In that case, there is an associated absolute cobordism $(V, \partial A, \partial B)$ where $V = \partial W\backslash (\operatorname{int}A \cup \operatorname{int}B)$. We view absolute cobordisms as special cases of relative cobordisms where $V = \emptyset$. A relative cobordism $(W,A,B)$ is a \emph{relative one-sided h-cobordism} if one of the pairs of inclusions $(A,\partial A)\hookrightarrow (W,V)$ or $(B,\partial B)\hookrightarrow (W,V)$ is a homotopy equivalence. A relative cobordism is \emph{nice} if it is absolute or if $(V, \partial A, \partial B) \approx (\partial A \times [0, 1] , \partial A \times {0} , \partial A\times {1})$. Readers are referred to \cite{RS82} for more details of such topic.

\begin{remark}\label{Remark: nice cobordism}
A situation similar to a nice relative cobordism occurs when $\partial W =
A\cup B'$, where $A$ and $B'$ are codimension 0 clean submanifolds of $\partial W$ with a common
nonempty boundary $\partial A = \partial B'$. We call such cobordism a \emph{precobordism}. By choosing a clean codimension 0 submanifold $B \subseteq B'$ with the property that $B'\backslash \operatorname{int}B \approx \partial B \times [0, 1]$ we arrive at a nice relative
cobordism $(W,A,B)$. When this procedure is applied, we will refer to $(W,A,B)$ as
a corresponding \emph{nice relative cobordism}. For notational consistency, we will always
adjust the term $B'$ on the far right of the triple $(W,A,B')$, leaving $A$ alone. A precobordism is a 
\emph{one-sided h-precobordism} if one of the pairs of inclusions $A \hookrightarrow W$ or $B' \hookrightarrow W$ is a homotopy equivalence.

\end{remark}

The role played by one-sided $h$-precobordisms in the study of pseudo-collars is illustrated by the following easy proposition.

\begin{proposition}
\label{Prop: one-sided h cobordism}
Let $W_i$ be a disjoint union of finitely many relative one-sided $h$-cobordisms $\bigsqcup_{j} (W_i^j,A_i^j,{B_{i}^{j}})$ with $A_i^j \hookrightarrow W_i^j$ a homotopy equivalence. Let $\bigsqcup_{j}A_i^j$ and $\bigsqcup_{j}{B_{i}^{j}}$ be $A_i$ and $B_i$ respectively. Suppose for each $i\geq 1$, there is a homeomorphism $h_i: B_i \to A_{i+1}$ identifying a clean codimension 0 submanifold $B_{i}^{j} \subset B_i$ with a clean codimension 0 submanifold $A_{i+1}^{j} \subset A_{i+1}$. Then the adjunction space 
$$N = W_1 \cup_{h_1} W_2 \cup_{h_2} W_3 \cup_{h_3}\cdots $$ 
is a pseudo-collar. Conversely, every pseudo-collar may be expressed as a countable union of relative one-sided $h$-cobordisms in this manner.
\end{proposition}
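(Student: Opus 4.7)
The plan is to handle the two directions separately: for the forward direction, I will inductively show each partial union is homotopy equivalent to its initial frontier $A_1$ and then pass to the colimit; for the converse, I will peel off a sequence of one-sided $h$-precobordisms from a cofinal tower of homotopy collars supplied by the pseudo-collar structure.

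\emph{Forward direction.} Set $N_k := W_1 \cup_{h_1} \cdots \cup_{h_{k-1}} W_k$; each $N_k$ is a manifold (a clean codimension-$0$ gluing of manifolds along a common boundary piece), and $N = \bigcup_k N_k$ inherits a manifold structure via the colimit topology. I would prove by induction that $A_1 \hookrightarrow N_k$ is a homotopy equivalence. The base case $k=1$ is the hypothesis on $(W_1, A_1, B_1)$. For the step, since $A_{k+1} \hookrightarrow W_{k+1}$ is a cofibration and a homotopy equivalence, the pushout $N_{k+1} = N_k \cup_{h_k} W_{k+1}$ makes $N_k \hookrightarrow N_{k+1}$ a homotopy equivalence by the gluing lemma; composing with the inductive hypothesis closes the induction. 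Taking the sequential colimit along cofibrations then yields $A_1 \hookrightarrow N$ is a homotopy equivalence, so $N$ is a homotopy collar. Applying the identical argument to each tail $N^{(k)} := W_k \cup_{h_k} W_{k+1} \cup \cdots$ shows every tail is a homotopy collar with frontier $A_k$; since the tails form a cofinal sequence of neighborhoods of infinity in $N$, the pseudo-collar property follows.

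\emph{Converse.} Given a pseudo-collar $N$, I would extract a nested cofinal sequence $N = N_1 \supset N_2 \supset \cdots$ of homotopy collar neighborhoods of infinity with $\bigcap_i N_i = \emptyset$, arranged so $N_{i+1}$ lies in the point-set interior of $N_i$. Define $W_i := \overline{N_i \setminus N_{i+1}}$, $A_i := \operatorname{Fr}N_i$, and $B_i' := \operatorname{Fr}N_{i+1}$; the canonical identity gives $h_i : B_i' \to A_{i+1}$. To verify $A_i \hookrightarrow W_i$ is a homotopy equivalence, I would note that $N_i = W_i \cup_{B_i'} N_{i+1}$ is a pushout and $B_i' = A_{i+1} \hookrightarrow N_{i+1}$ is a cofibration and a homotopy equivalence (by the homotopy collar condition on $N_{i+1}$), so $W_i \hookrightarrow N_i$ is a homotopy equivalence by the gluing lemma; applying the 2-out-of-3 property to $A_i \hookrightarrow W_i \hookrightarrow N_i$, whose composite is a h.e. by the homotopy collar condition on $N_i$, yields $A_i \hookrightarrow W_i$ is a h.e. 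Decomposing $W_i$ into connected components produces the required pieces $(W_i^j, A_i^j, (B_i^j)')$, since the $\pi_0$-bijection forced by the h.e. matches components of $A_i$ and $W_i$ bijectively; finally, the construction in Remark \ref{Remark: nice cobordism} converts each precobordism into a nice relative one-sided $h$-cobordism by collaring $\partial A_i^j$ inside $\partial_M W_i^j$ and trimming $(B_i^j)'$ to $B_i^j$.

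The main obstacle is the PL collar adjustment in the converse: producing the nice relative cobordism structure $\partial_M W_i^j \approx \partial A_i^j \times [0,1]$ when $\partial M$ is noncompact requires a careful choice of the tower $\{N_i\}$ so that successive frontiers intersect $\partial M$ in general position, invoking the standard regular-neighborhood apparatus of \cite{RS82}. A minor subtlety in the forward direction is the cofinality of the tails $N^{(k)}$, which holds under the implicit compactness of each layer $W_i$, ensuring that the partial unions $N_k$ exhaust $N$ properly.
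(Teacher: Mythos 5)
Your proposal is correct and follows essentially the same route as the paper: induction plus a direct-limit argument for the forward direction, and for the converse, peeling off the layers $W_i=\overline{N_i\setminus N_{i+1}}$ between successive homotopy collars, deducing $A_i\hookrightarrow W_i$ is a homotopy equivalence (the paper asserts this where you supply the gluing-lemma/2-out-of-3 argument), and invoking Remark \ref{Remark: nice cobordism} to pass from precobordisms to nice relative one-sided $h$-cobordisms. The only slip is notational: for $(W_i,A_i,B_i')$ to be a precobordism you must take $B_i'=\operatorname{Fr}N_{i+1}\cup\partial_{N_i}W_i$ rather than $\operatorname{Fr}N_{i+1}$ alone, which your final sentence about $\partial_M W_i^j$ effectively acknowledges.
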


\begin{proof}
For the forward implication, the definition of relative one-sided $h$-cobordism implies that $\operatorname{Fr}N = A_1 \hookrightarrow W_1 \cup_{h_1}\cdots \cup_{h_{k-1}}W_k$ is a homotopy equivalence for any finite $k$. Then a direct limit argument shows that $\operatorname{Fr}N \hookrightarrow N$ is a homotopy equivalence. Hence, $N$ is a homotopy collar. To see that $N_i$ is a pseudo-collar, we apply the same argument to the subset $N_{i} = W_{i+1} \cup_{h_{i+1}} W_{i+2} \cup_{h_{i+2}} W_{i+3} \cup_{h_{i+3}}\cdots $.

For the converse, assume $N$ is a pseudo-collar. Choose a homotopy collar $N_1 \subset \operatorname{Int}N$ and let $W_1 = N \backslash \operatorname{Int}N_1$. Then $\operatorname{Fr}N \hookrightarrow W_1$ is a homotopy equivalence. So, 
$(W_1,\operatorname{Fr}N, \operatorname{Fr} N_1\cup \partial_N W_1)$ is a one-sided $h$-precobordism. Denote a component of $N_1$ by $N_1^j$. Let $N_2'$ be the disjoint union of homotopy collars in $N_1^j$ and $W_2^j = N_1^j \backslash \operatorname{Int}N_2'$. Since $\operatorname{Fr}N_1^j \hookrightarrow W_2^j$ is a homotopy equivalence, each $(W_2^j,\operatorname{Fr}N_1^j, \operatorname{Fr} N_2'\cup \partial_{N_1^j} W_2^j)$ is a  one-sided $h$-precobordism. Repeating the procedure concludes the argument. See Figure \ref{fig 1}.

\begin{figure}[h!]
        \centering
       \includegraphics[ width=10cm, height=8.5cm]{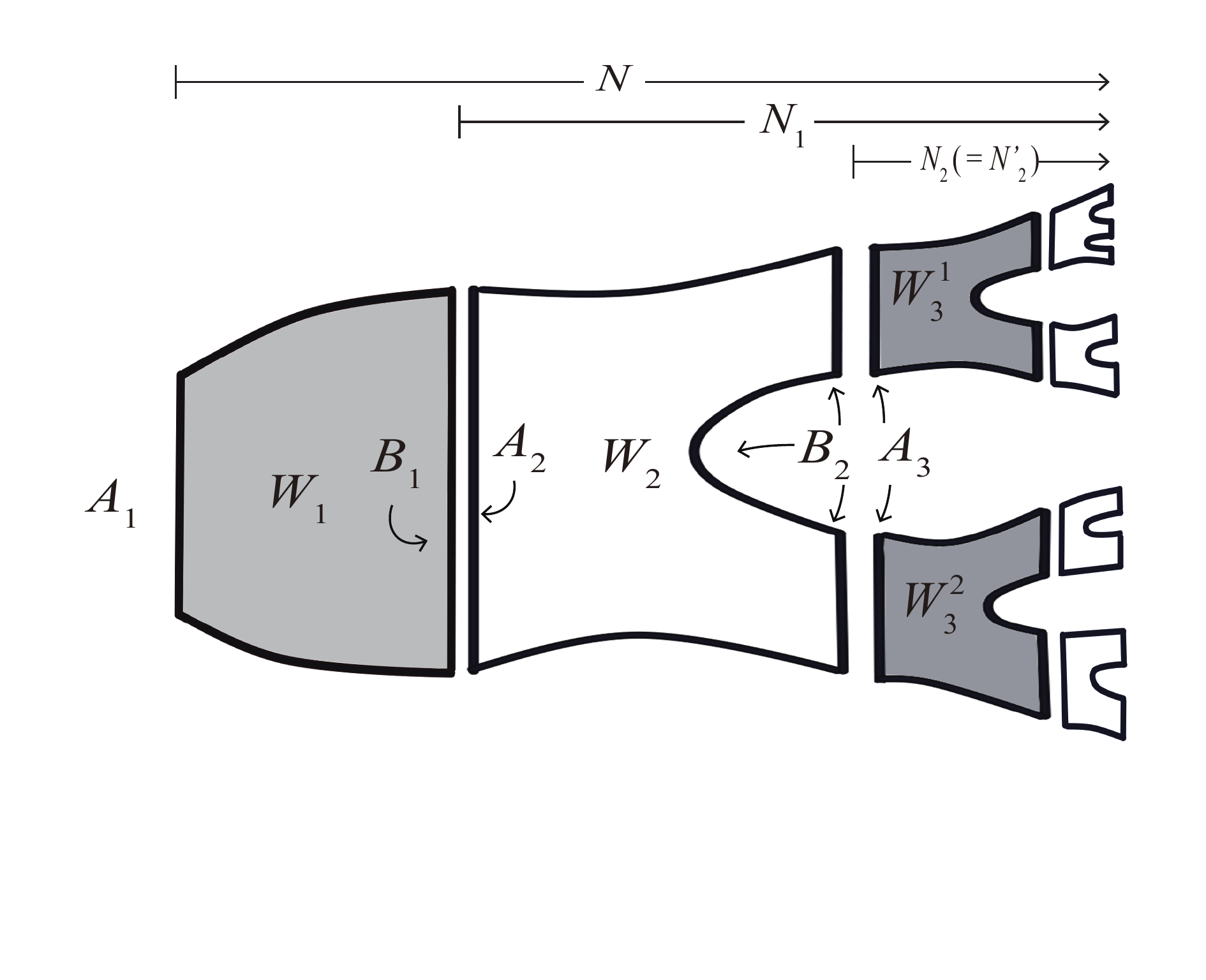}
       \vspace{-2em}
       \caption{A concatenation of relative one-sided $h$-cobordisms.}
        \label{fig 1}
\end{figure}

By the cleanliness of $\operatorname{Fr}N$ and $\operatorname{Fr}N_i$'s together with the adjustment described in Remark \ref{Remark: nice cobordism}, one can re-define one-sided $h$-precobordisms
$$(W_1,\operatorname{Fr}N, \operatorname{Fr} N_1\cup \partial_N W_1),(W_2^j,\operatorname{Fr}N_1^j, \operatorname{Fr} N_2'\cup \partial_{N_1^j} W_2^j), \dots$$ 
as nice relative cobordisms
$$(W_1,\operatorname{Fr}N, B_1), (W_2^j,\operatorname{Fr}N_1^j, B_2^j), \dots$$
where $B_1,B_2^j,\dots$ are clean codimension 0 submanifold $\subseteq \operatorname{Fr} N_1\cup \partial_N W_1$, $\operatorname{Fr} N_2'\cup \partial_{N_1^j} W_2^j,\dots,$ respectively with the property that $(\operatorname{Fr} N_1\cup \partial_N W_1)\backslash \operatorname{int}B_1 \approx \partial B_1\times [0,1]$, $(\operatorname{Fr} N_2'\cup \partial_{N_1^j} W_2^j)\backslash \operatorname{int}B_2^j \approx \partial B_2^j \times [0,1],\dots $.

Then it's easy to see that those nice relative cobordisms are relative one-sided $h$-cobordisms.
\end{proof}

The following lemma proved by duality and standard covering space theory is crucial in this paper.
\begin{lemma}
\label{perfect kernel}
Let $(W,A,B')$ be a one-sided $h$-precobordism with $A \hookrightarrow W$ a homotopy equivalence. Then the inclusion induced map
$$i_\#: \pi_1(B')\to \pi_1(W)$$
is surjective and has perfect kernel.
\end{lemma}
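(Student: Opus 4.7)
The plan is to combine Poincar\'e--Lefschetz duality with elementary covering-space theory. Pass to the universal cover $p\colon\widetilde W\to W$, and set $\widetilde A=p^{-1}(A)$ and $\widetilde B'=p^{-1}(B')$. Because $A\hookrightarrow W$ is a homotopy equivalence, it induces an isomorphism on $\pi_1$, so $\widetilde A$ is connected and equals the universal cover of $A$; the lifted inclusion $\widetilde A\hookrightarrow\widetilde W$ is then itself a homotopy equivalence (both spaces are simply connected with matching higher homotopy groups, so Whitehead's theorem applies). Consequently $H_*(\widetilde W,\widetilde A;\mathbb Z)=0$, and the universal coefficient theorem forces $H^*(\widetilde W,\widetilde A;\mathbb Z)=0$ as well.

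Next I invoke Poincar\'e--Lefschetz duality for the $m$-manifold $\widetilde W$ with boundary $\widetilde A\cup\widetilde B'$, yielding an isomorphism
\[
H_k(\widetilde W,\widetilde B')\;\cong\;H^{m-k}(\widetilde W,\widetilde A)
\]
(using compactly supported cohomology on the right if $\widetilde W$ is noncompact, and twisted by the orientation sheaf in the nonorientable case). Combined with the previous step this gives $H_*(\widetilde W,\widetilde B')=0$, so the long exact sequence of the pair $(\widetilde W,\widetilde B')$ produces isomorphisms $H_k(\widetilde B')\cong H_k(\widetilde W)$ for every $k$.

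Two observations now complete the proof. First, $H_0(\widetilde B')\cong H_0(\widetilde W)=\mathbb Z$, so $\widetilde B'$ is connected. Since $p|_{\widetilde B'}\colon\widetilde B'\to B'$ is the covering of $B'$ classified by the subgroup $\ker i_\#$ of $\pi_1(B')$, its connectedness simultaneously forces $i_\#$ to be surjective and identifies $\pi_1(\widetilde B')$ with $\ker i_\#$. Second, $H_1(\widetilde B')\cong H_1(\widetilde W)=0$ since $\widetilde W$ is simply connected, hence $(\ker i_\#)^{\mathrm{ab}}=\pi_1(\widetilde B')^{\mathrm{ab}}=0$, i.e., $\ker i_\#$ is perfect. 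The one technical point to watch is the duality step: in the noncompact-boundary setting of this paper the precobordism $W$ may itself be noncompact, so one should either exhaust by compact pieces or use compactly supported cohomology, and handle nonorientability via the orientation double cover --- all standard adjustments that do not affect the algebraic conclusion.
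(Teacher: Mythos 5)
Your argument is essentially the paper's own proof: pass to the universal cover, apply Poincar\'e--Lefschetz duality to convert the vanishing of $H_*(\widetilde W,\widetilde A)$ into the vanishing of $H_*(\widetilde W,\widetilde B')$, and then read off surjectivity from $H_0(\widetilde B')$ and perfectness from $H_1(\widetilde B')$ via covering-space theory. The one place where your write-up, as literally stated, does not quite close is the cohomological step in the noncompact case: the universal coefficient theorem gives you vanishing of \emph{ordinary} cohomology $H^*(\widetilde W,\widetilde A)$, but the duality isomorphism you (correctly) invoke pairs $H_k(\widetilde W,\widetilde B')$ with \emph{compactly supported} cohomology $H^{m-k}_c(\widetilde W,\widetilde A)$, and vanishing of ordinary cohomology does not imply vanishing of compactly supported cohomology (the latter is only a proper homotopy invariant). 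The paper closes this by observing that $\widetilde A\hookrightarrow\widetilde W$ is a \emph{proper} homotopy equivalence, so $H^*_c(\widetilde W,\widetilde A)=0$ directly; you should replace the UCT step with that observation (your closing remark about ``exhausting by compact pieces'' gestures at this but is not a substitute for it). With that single adjustment the proof is the paper's proof.
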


\begin{proof}
The proof is similar to the argument of Theorem 2.5 in \cite{GT03}. Let $p:\tilde{W}\to W$ be the universal covering projection, $\tilde{A}=p^{-1}(A)$ and $\hat{B'}=p^{-1}(B')$. By generalized Poincar\'{e} duality for non-compact manifolds \cite[Thm.3.35, P. 245]{Hat02},
$$H_k(\tilde{W},\hat{B'};\mathbb{Z}) \cong H_{c}^{n-k}(\tilde{W},\tilde{A};\mathbb{Z}),$$
where cohomology is with compact supports. Since $\tilde{A} \hookrightarrow \tilde{W}$ is a proper homotopy equivalence, all of these relative cohomology groups vanish, so $H_k(\tilde{W},\hat{B'};\mathbb{Z})=0$ for all $k$. It follows that $H_1(\tilde{W},\hat{B'};\mathbb{Z})$ vanishes. Then by considering the long exact sequence for $(\tilde{W},\hat{B'})$, we have $H_0(\hat{B'};\mathbb{Z})=\mathbb{Z}$. Thus, $\hat{B'}$ is connected. By covering space theory, the components of $\hat{B'}$ are 1-1 corresponding to the cosets of $i_\#(\pi_1(\hat{B}'))$ in $\pi_1(W)$. So, $i_\#$ is surjective. To see the kernel of $i_\#$ is perfect, we consider the long exact sequence for $(\tilde{W},\hat{B'})$ again. Using $H_2(\tilde{W},\hat{B'};\mathbb{Z})=0$ together with the simple connectivity of $\tilde{W}$, $H_1(\hat{B'};\mathbb{Z})$ vanishes. Hence, $\pi_1(\hat{B'})$ is perfect. By covering space theory, $\pi_1(\hat{B'})\cong \ker i_\#$ is perfect.
\end{proof}

The following well-known lemma plays an important role in the proof of Theorem \ref{Th: Characterization Theorem}. The proof follows easily from the Seifert-van Kampen Theorem.

\begin{lemma}
\label{amalgamation}
Let $X$ be a connected CW complex and $Y \subset X$ a connected subcomplex. Let $Y'$ be the resulting space obtained by attaching $2$-cells to $Y$ along loops $\{l_i\}$ in $Y$. Then $\pi_1(Y')\cong \pi_1(Y)/N$, where $N$ is the normal closure in $\pi_1(Y)$ of $\{\operatorname{incl}_{\#}(\pi_1(l_i))\}$. Let $X' = X \cup Y'$. Suppose $i_\# :\pi_1(Y) \to \pi_1(X)$ is the inclusion induced map. Then $\pi_1(X') \cong \pi_1(X)/N'$, where $N'$ is the normal closure in $\pi_1(X)$ of $i_\#(N)$. Thus, if $N$ is perfect, so is $N'$ (since the image of a perfect group is perfect and the normal closure of a perfect group is perfect.)
\end{lemma}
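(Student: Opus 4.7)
The plan is to apply the Seifert--van Kampen theorem twice, as the author suggests.

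\emph{Computing $\pi_1(Y')$.} Attaching a single 2-cell to $Y$ along a loop $l_i$ kills the class $[l_i]$ in $\pi_1(Y)$. This is the textbook application of van Kampen: thicken $Y$ to an open set deformation-retracting to it, take the interior of the attached disk as the other open set, and note that their intersection has the homotopy type of $S^1$. Iterating over all $\{l_i\}$ (and observing that conjugating the attaching map, i.e., changing the basepoint by a path in $Y$, is exactly what produces the normal-closure effect) yields the first isomorphism $\pi_1(Y') \cong \pi_1(Y)/N$.

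\emph{Computing $\pi_1(X')$.} For $X' = X \cup_Y Y'$, I take the open cover $\{U, V\}$ of $X'$ where $U$ is an open CW-thickening of $X$ with $U \simeq X$, $V$ is an open thickening of $Y'$ with $V \simeq Y'$, and $U \cap V \simeq Y$. Van Kampen then gives the pushout (amalgamated product) $\pi_1(X) \ast_{\pi_1(Y)} \pi_1(Y')$. Because the right-hand bonding map $\pi_1(Y) \twoheadrightarrow \pi_1(Y') = \pi_1(Y)/N$ is surjective with kernel $N$, a standard pushout-of-groups argument collapses the amalgamated product to $\pi_1(X)$ modulo the normal closure of $i_\#(N)$; that is, $\pi_1(X') \cong \pi_1(X)/N'$. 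This is the main computation, but it is elementary group theory.

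\emph{The perfect conclusion.} This is purely algebraic. The image $i_\#(N)$ is perfect because homomorphic images of perfect groups are perfect. Each conjugate $g\, i_\#(N) \, g^{-1}$ in $\pi_1(X)$ is isomorphic to $i_\#(N)$ as an abstract group, hence perfect. Finally, any subgroup generated by a family of perfect subgroups is itself perfect: every generator is a product of commutators drawn from one of those subgroups, so the whole group equals its own commutator subgroup. Since $N'$ is by definition generated by the conjugates $g\, i_\#(N) \, g^{-1}$, it follows that $N'$ is perfect.

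The only mild obstacle is confirming that the CW thickenings used in the second application of van Kampen really satisfy the open-cover hypothesis, but this is standard for CW pairs and requires no new input; everything else is bookkeeping.
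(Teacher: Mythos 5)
Your proposal is correct and follows exactly the route the paper intends: the paper offers no written proof beyond the remark that the lemma "follows easily from the Seifert--van Kampen Theorem," and your two applications of van Kampen (the standard 2-cell-attachment computation for $\pi_1(Y')$, the amalgamated pushout collapsing to $\pi_1(X)/N'$ because $\pi_1(Y)\to\pi_1(Y')$ is surjective) together with the elementary argument that a group generated by perfect subgroups is perfect supply precisely the missing details.
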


\begin{lemma}\label{Lemma: perfect kernel}
Let $P$ be a compact $(n-1)$-manifold with boundary and $\{A_i\}$ a finite collection of pairwise disjoint compact codimension $0$ clean (and connected) submanifolds of $P$. Let $\{(W_i,A_i,B_i')\}$ be a collection of one-sided $h$-precobordisms with $A_i \hookrightarrow W_i$ a homotopy equivalence.  Assume each $W_i$ intersects $P$ along $A_i$. Let $R = P \cup (\cup_i W_i)$ and $Q = (P \backslash (\cup_i A_i)) \cup (\cup_i B_i')$. Then $\pi_1(Q) \to \pi_1(R) \cong \pi_1(P)$ is surjective and has perfect kernel.
\end{lemma}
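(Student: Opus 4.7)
The plan is to compute $\pi_1(R)$ via Seifert--van Kampen applied to the decomposition $R = Q \cup \bigl(\bigcup_i W_i\bigr)$, then extract from the resulting pushout presentation that $\pi_1(Q)\to \pi_1(R)$ is surjective with perfect kernel, using Lemma \ref{perfect kernel} as the essential input. First, I would check that $\pi_1(R)\cong \pi_1(P)$: each inclusion $A_i \hookrightarrow W_i$ is a homotopy equivalence from a clean codimension-$0$ submanifold of $\partial W_i$ into $W_i$, hence a cofibration-and-homotopy-equivalence, hence a strong deformation retract; performing these retractions simultaneously over the pairwise disjoint $W_i$'s collapses $R$ onto $P$.

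Second, I would verify the set-theoretic identity $R = Q \cup \bigl(\bigcup_i W_i\bigr)$ with $Q \cap W_i = B_i'$. The union is immediate: $Q \cup \bigcup_i W_i = (P\setminus \bigcup_j A_j) \cup \bigcup_i B_i' \cup \bigcup_i W_i = P \cup \bigcup_i W_i = R$, since $A_i, B_i' \subseteq W_i$. For the intersection, $W_i$ meets $P$ only along $A_i$ so is disjoint from $P \setminus \bigcup_j A_j$; and the $W_i$'s are pairwise disjoint, so $W_i \cap B_j' = \emptyset$ for $j \ne i$; hence $W_i \cap Q = B_i'$. After thickening to an open cover in the standard way, and noting that each $B_i'$ is path-connected (as the proof of Lemma \ref{perfect kernel} shows), Seifert--van Kampen presents $\pi_1(R)$ as the pushout of $\pi_1(Q)$ and $*_i\, \pi_1(W_i)$ amalgamated over $\pi_1(B_i')$ via the inclusion-induced maps $j_{i,\#}: \pi_1(B_i') \to \pi_1(Q)$ and $k_{i,\#}: \pi_1(B_i') \to \pi_1(W_i)$.

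Finally, by Lemma \ref{perfect kernel} each $k_{i,\#}$ is surjective with perfect kernel $K_i$. A standard pushout fact in groups --- when one leg is surjective with kernel $K$, the pushout equals the other leg quotiented by the normal closure of the image of $K$ --- yields $\pi_1(R) \cong \pi_1(Q)/N$, where $N$ is the normal closure in $\pi_1(Q)$ of $\bigcup_i j_{i,\#}(K_i)$. This establishes surjectivity of $\pi_1(Q) \to \pi_1(R)$. Each $j_{i,\#}(K_i)$ is perfect as the image of a perfect group, and any subgroup generated by conjugates of perfect subgroups is itself perfect (every generator is a product of commutators lying inside one of the perfect subgroups, hence in $[N,N]$), so $N$ is perfect; Lemma \ref{amalgamation} packages exactly this bookkeeping. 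The one subtle step I expect to navigate carefully is the pushout-to-quotient passage with multiple gluing pieces simultaneously; if doing it in a single stroke feels awkward, I would instead iterate Seifert--van Kampen one $W_i$ at a time, noting that at each stage the newly adjoined normal subgroup is a quotient image of a perfect $K_i$ (still perfect), so the running normal subgroup remains perfect throughout.
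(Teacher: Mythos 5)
Your proof is correct and follows essentially the same route as the paper's: both rest on Lemma \ref{perfect kernel} applied to each $(W_i,A_i,B_i')$, Seifert--van Kampen for the decomposition of $R$ along the $B_i'$, and the observation that the normal closure of the images of the perfect kernels is perfect (the content of Lemma \ref{amalgamation}). The only cosmetic difference is that the paper attaches the $W_i$ one at a time and then composes surjections with perfect kernels (Lemma \ref{lemma: subsequence preserves perfect semistable}), which is precisely the iterative fallback you describe in your last sentence.
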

\begin{proof}
We begin with $Q = (P \backslash (\cup_i A_i)) \cup (\cup_i B_i')$. Choose a finite collection of arcs in $P$ that connect up the $A_i$. By adding tubular neighborhoods of these arcs, we get a clean connected codimension 0 submanifold $A$ of $P$. Attaching $W_1$ along $B_1'$. See Figure \ref{perfect_kernel}.

\begin{figure}[h!]
        \centering
       \includegraphics[ width=10cm, height=8cm]{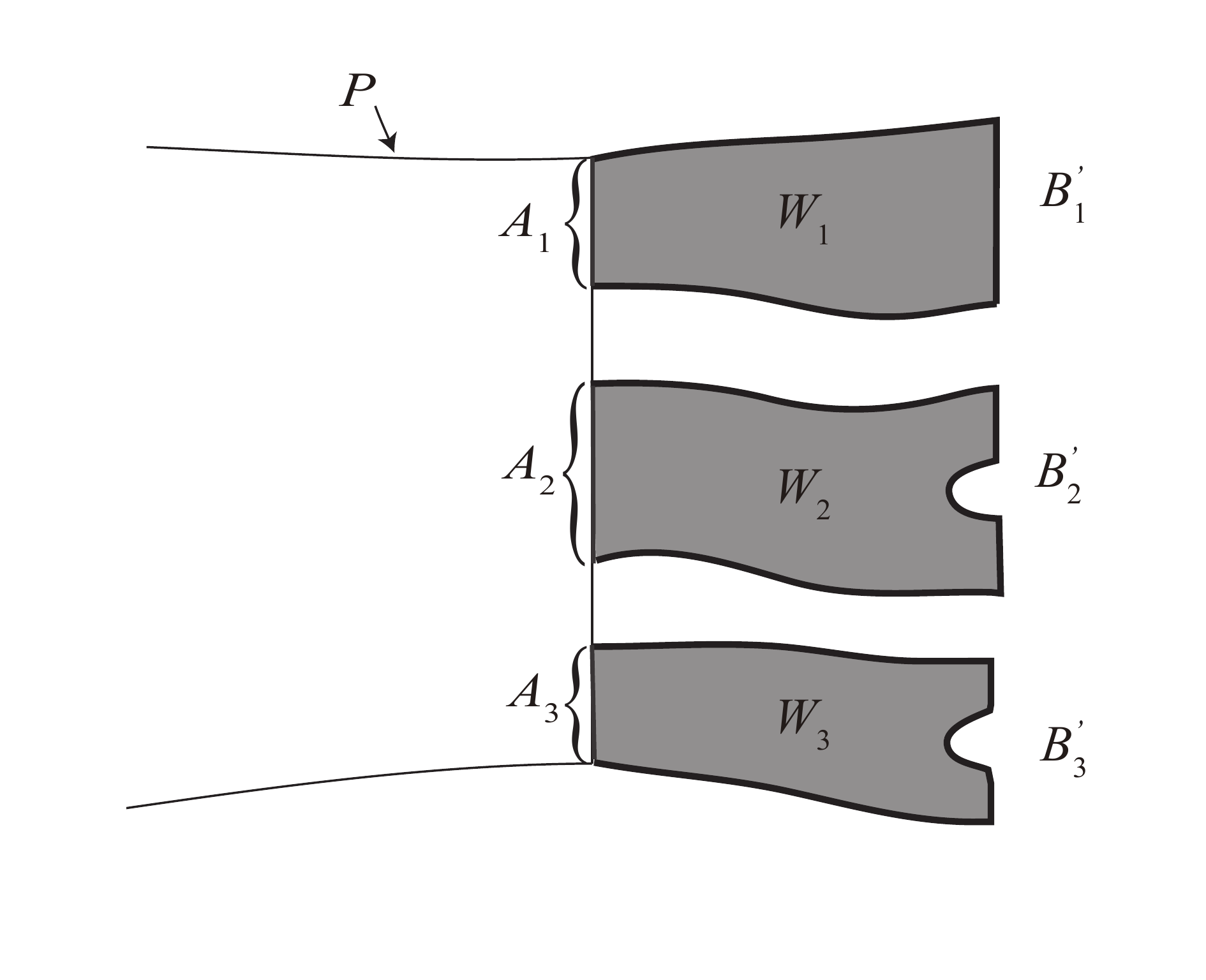}
       \vspace{-1em}
       \caption{$A_i$ is the arc bounded by curly brace and $B_i'$ is thickened black arc. The union of thin arcs and $A_i$'s is $P$.}
        \label{perfect_kernel}
\end{figure}
By Lemma \ref{perfect kernel}, the inclusion induced map $\lambda_1: \pi_1(B_1')\twoheadrightarrow \pi_1(W_1)$ is surjective and $\ker \lambda_1$ is perfect.  Let $L$ be a wedge of loops in $B_1'$ which together generate $\ker \lambda_1$ and $Y_1'$ be the space obtained by attaching 2-cells to the interior $B_1'$ along these loops.  Since $A_1 \hookrightarrow W_1$ is a homotopy equivalence, by Lemma $\ref{amalgamation}$,
$$\pi_1(W_1)\cong \pi_1(A_1)\cong \pi_1(Y') \cong \pi_1(B_1')/\ker \lambda_1,$$
where $\ker \lambda_1$ is the normal closure in $\pi_1(B_1')$ of $\lambda_1(\pi_1(L))$.
Note that $A_1 \cap B_1'= \partial A_1 = \partial B_1'$. By Seifert-van Kampen, 
$$\pi_1((Q \backslash B_1') \cup A_1) \cong \pi_1(Q \cup W_1) \cong \pi_1(Q \cup Y_1').$$
 Let $\iota_{1}^{\ast}: \pi_1(B_1') \to \pi_1(Q)$ be the inclusion induced map. 
Then Lemma \ref{amalgamation} implies $\pi_1(Q \cup Y_1') \cong \pi_1(Q)/\operatorname{ncl}$, where $\operatorname{ncl}$ is the normal closure in $\pi_1(Q)$ of 
$\iota_{1}^{\ast}(\ker \lambda_1)$. Hence, $\phi_1: \pi_1(Q) \twoheadrightarrow \pi_1(Q \cup W_1)$ is surjective and has perfect kernel.

Attaching $W_2$ along $B_2'$ in $Q \cup W_1$. Repeat the above argument, one can show that $\phi_2: \pi_1(Q \cup W_1) \twoheadrightarrow \pi_1(Q\cup W_1 \cup W_2)$ is surjective and has perfect kernel. Assume there are $k$ $A_i$'s. By induction, we have the following sequence
\begin{equation}
\pi_1(Q) \xtwoheadrightarrow{\phi_1} \pi_1(Q \cup W_1) \xtwoheadrightarrow{\phi_2}  \cdots 
\xtwoheadrightarrow{\phi_{k}} \pi_1(Q\cup (\cup_{i=1}^{k} W_i))
\end{equation}
Since each $\ker \phi_i$ is perfect, by Lemma \ref{perfect kernel}, the composition $\Phi = \phi_k \circ \cdots \circ \phi_2 \circ \phi_1$ yields a desired surjection $\pi_1(Q) \twoheadrightarrow \pi_1(R) \cong \pi_1(P)$ and $\ker \Phi$ is perfect.
\end{proof}

\section{Proof of Theorem \ref{Th: Characterization Theorem}:
necessity\label{Section: Proof of Characterization Th: necessity}}

The proof of the necessity of Conditions (\ref{condition a}) and (\ref{condition c}) of Theorem \ref{Th: Characterization Theorem} follow readily by definition of pseudo-collar. Thus, it suffices to show that pseudo-collarability implies Condition (\ref{condition b}).

\begin{proof}[Proof of Theorem \ref{Th: Characterization Theorem} (necessity)]
Suppose $M^m$ is pseudo-collarable and $N$ is a homotopy collar. Then it's easy to see that each component $N^j$ of $N$ is a homotopy collar. 
By the definition of pseudo-collarability, we choose a desired cofinal sequence of clean neighborhoods of infinity $\{N_i^l\}_{l=1}^{k_i}$ such that each $N_i^l$ is a homotopy collar contained in $N^j$. Proposition \ref{Prop: finite ends} guarantees that each $N_i^l \backslash \partial M^m$ is 1-ended --- thus, each $N_i^l$ is $\partial_M N_i^l$-connected at infinity. Let $N_{i,i+s}^{l}= N_i^l \cap (\bigsqcup_{1}^{k_{i+s}} N_{i+s}^{t})$ ($s = 1,2,\dots$) is the disjoint union of finitely many components $N_{i+s}^t$ contained in $N_{i}^{l}$. By Proposition \ref{Prop: one-sided h cobordism}, $N^j$ ($=N_1^1$) can be subdivided into relative one-sided $h$-cobordisms.  That is, each $W_i^l = \overline{N_{i}^{l}\backslash N_{i,i+1}^{l}}$.  By definition, we may consider the sequence
\begin{equation}
\label{semistable}
\pi_1(\partial_{M}N_1^1 \cup N_{1,2}^1)\leftarrow \pi_1(\partial_{M}N_1^1 \cup N_{1,3}^1)\leftarrow \pi_1(\partial_{M}N_1^1 \cup N_{1,4}^1)\leftarrow \cdots
\end{equation}
where base rays are suppressed and bonding homomorphisms are compositions of maps induced by inclusions and change-of-basepoint isomorphisms.
Let $\overline{\partial_M N_i^l\backslash \partial_M N_{i,i+1}^l}$ be $D_{i,i+1}^l$ ($i=1,2,3,\dots$) and $D_{i,i+2}^l=D_{i,i+1}^l \cup D_{i+1,i+2}^l$. Consider the following diagram. Each bonding map in the top row is an inclusion.
\[%
\begin{array}
[c]{ccccccc}%
\partial_M N_1^1 \cup N_{1,2}^1  & \hookleftarrow & \partial_M N_1^1 \cup N_{1,3}^1
& \hookleftarrow &  \partial_M N_1^1 \cup N_{1,4}^1 & \hookleftarrow & \cdots\\
\uparrow \text{incl.} &  & \uparrow\text{incl.} &  & \uparrow\text{incl.} &  & \\
D_{1,2}^1 \cup \operatorname{Fr}N_{1,2}^1  &  & D_{1,3}^1\cup \operatorname{Fr}N_{1,3}^1  &
 & D_{1,4}^1 \cup \operatorname{Fr}N_{1,4}^1 & & \cdots
\end{array}
\]
Since each $\operatorname{Fr}N_i^l \hookrightarrow N_i^l$ is a homotopy equivalence, all the vertical maps are homotopy equivalences. By \P 2 in the proof of Proposition \ref{Prop: one-sided h cobordism}, $(W_i^l, \operatorname{Fr}N_i^l, \operatorname{Fr}N_{i,i+1}^{l} \cup \partial_{M}W_{i}^{l})$ is a one-sided $h$-precobordism. Apply Lemma \ref{Lemma: perfect kernel},
$$\pi_1(D_{1,i+2}^1 \cup \operatorname{Fr}N_{1,i+2}^1) \twoheadrightarrow \pi_1(D_{1,i+1}^1\cup \operatorname{Fr}N_{1,i+1}^1)$$ is surjective
and has perfect kernel.
\end{proof}

\section{Proof of Theorem \ref{Th: Characterization Theorem}:
sufficiency\label{Section: Proof of Characerization Theorem: sufficiency}}
We begin the proof of the "sufficiency argument" with three theorems that will be key ingredients in the proof. Each is a straightforward extension of an established result from the literature.

The following theorem is a modest generalization of the Pseudo-collarability Characterization Theorem in \cite{GT06} to some manifolds
with noncompact boundary in the same way the Siebenmann's "Relativized Main Theorem 10.1" provided a mild extension of the Main Theorem
of \cite{Sie65} to some manifolds with noncompact boundary.

\begin{theorem}[Relativized Pseudo-collarability Characterization Theorem]
\label{Relativized theorem}
Suppose $M^m$ ($m\geq 6$) is one-ended and $\partial M^m$ is homeomorphic to the interior of a compact manifold. 
Then $M^m$ is pseudo-collarable iff $M^m$ is
\begin{enumerate}
\item inward tame,
\item $\pi_1(\varepsilon(M^m))$ is perfectly semistable,
\item $\sigma_{\infty}(M^m) = 0$.
\end{enumerate}
\end{theorem}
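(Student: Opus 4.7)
The necessity of (1), (2), (3) follows from the same structural argument as in \cite{GT06}, carried out exactly as in Section \ref{Section: Proof of Characterization Th: necessity}: any homotopy collar is finitely dominated, so pseudo-collarability gives inward tameness; perfect semistability of $\operatorname{pro}$-$\pi_1$ follows from the one-sided $h$-precobordism decomposition given by Proposition \ref{Prop: one-sided h cobordism}, together with Lemma \ref{Lemma: perfect kernel} (with the simplification that the ``peripheral'' version is unnecessary because $M^m$ is 1-ended and we are working with ordinary $\operatorname{pro}$-$\pi_1$); and each homotopy collar has finite homotopy type, being homotopy equivalent to its compact frontier, so $\sigma_\infty(M^m)=0$.

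For sufficiency, the plan is to adapt the handle-trading construction of \cite{GT06} by installing a boundary structure coming from the hypothesis $\partial M^m \cong \operatorname{int} Q$. Fix an exhaustion $\partial M^m = \bigcup_i P_i$ by compact codimension 0 clean submanifolds with $\overline{P_{i+1}\setminus P_i}\approx \partial P_i \times [0,1]$, and fix a boundary collar $c:\partial M^m \times [0,1)\hookrightarrow M^m$. Combining the compact pieces $c(P_i \times [0,\tfrac12])$ with an exhaustion of $\operatorname{int} M^m$ by compact sets produces a cofinal sequence $\{N_i\}$ of clean 0-neighborhoods of infinity in $M^m$ with the key property that $\operatorname{Fr} N_i \cap \partial M^m \approx \partial P_i$ and $\partial_M N_i = \overline{\partial M^m \setminus P_i}$. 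The slabs $W_i = \overline{N_i \setminus N_{i+1}}$ are then relative cobordisms whose boundary strips along $\partial M^m$ are the products $\overline{P_{i+1}\setminus P_i}\approx \partial P_i \times [0,1]$, which is precisely the ``nice'' structure of Remark \ref{Remark: nice cobordism}.

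The remaining task is to modify each $W_i$ by handle trading so that $\operatorname{Fr} N_i \hookrightarrow W_i$ becomes a homotopy equivalence; Proposition \ref{Prop: one-sided h cobordism} then assembles the modified slabs into a pseudo-collar. Condition (1) together with Lemma \ref{Lemma: inward tameness of deleted manifolds}, and condition (3) together with Lemma \ref{Lemma wall obstruction M - A} and the Sum Theorem, let us arrange (after passing to a subsequence) that each $N_i$ has finite homotopy type. Condition (2), after a further subsequence, provides inverse sequences of surjections with perfect kernels whose generating loops can be realized by embedded framed circles. Attaching 2-handles to these circles kills the kernel, and the complementary 3-handles cancel because the killed groups are perfect; this is the handle-theoretic plus construction that drives the proof in \cite{Gui00, GT06}. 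The principal technical obstacle---and the reason the result is not an immediate quotation of \cite{GT06}---is that every handle core and cocore must be positioned in $\operatorname{int} M^m$, away from the noncompact boundary strip $c(\overline{\partial M^m\setminus P_i}\times [0,\tfrac12))$, so that the product structure of the ``vertical'' face of each $W_i$ is preserved. Since $m\geq 6$, general position makes this isotoping routine, and the \cite{GT06} argument then applies on each slab to yield a relative one-sided $h$-cobordism, completing the construction.
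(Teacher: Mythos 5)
Your proposal is correct and follows essentially the same route as the paper: both reduce to the handle-theoretic argument of \cite{Gui00, GT06}, exploiting the product structure of $\partial M^m$ near infinity to keep all handle operations in $\operatorname{int}M^m$ (\`a la Siebenmann's relativized theorem) and relying on the relativized embedded plus construction (Theorem \ref{Embedded plus construction}) for the crucial $\pi_1$ step. The only cosmetic difference is that you organize the construction slab-by-slab and assemble via Proposition \ref{Prop: one-sided h cobordism}, whereas the paper phrases the same process as upgrading neighborhoods of infinity to generalized $(n-3)$- and then $(n-2)$-neighborhoods in the sense of \cite{Gui00}.
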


Quillen's famous "plus construction" \cite{Qui71} or \cite[Section 11.1]{FQ90} provides a partial converse to Lemma \ref{perfect kernel}.

\begin{theorem}[The Relativized Plus Construction]
\label{Plus construction}
Let $B$ be a compact $(n-1)$-manifold ($n\geq 6$) and $h: \pi_1(B) \twoheadrightarrow H$ a surjective homomomorphism onto a finitely presented group such that $\ker(h)$ is perfect. There exists a compact $n$-dimensional nice relative cobordism $(W,A,B)$ such that $\ker(\pi_1(B)\to \pi_1(W))=\ker h$, and $A \hookrightarrow W$ is a simple homotopy equivalence. These properties determine $W$ uniquely up to homeomorphism rel $B$.
\end{theorem}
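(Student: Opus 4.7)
\emph{Proof proposal.} The plan is to execute a relative version of Quillen's plus construction on the trivial cobordism $B \times [0,1]$, and then obtain uniqueness via the $s$-cobordism theorem.

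For \emph{existence}, I would first use that $\ker h$ is perfect and $H$ is finitely presented to choose finitely many commutators $[x_1,y_1],\dots,[x_k,y_k]$ in $\pi_1(B)$ whose normal closure is $\ker h$. Next, I would represent these by pairwise disjoint framed embedded circles in $B \times \{1\} \subset \partial(B \times [0,1])$, which is possible since $\dim(B \times \{1\}) = n-1 \geq 5$. Attaching 2-handles along these circles gives a manifold $W_1$; Lemma \ref{amalgamation} yields $\pi_1(W_1) \cong H$ with $B \hookrightarrow W_1$ inducing $h$. Each 2-handle contributes a generator to $H_2(W_1, B; \mathbb{Z}[H])$; perfectness of $\ker h$ lets us cancel these, because each $[x_i,y_i]$ bounds an immersed disk in $B$, which combined with the core of the corresponding 2-handle produces an immersed 2-sphere in $\partial W_1 \setminus (B \times \{0\})$ carrying the extra homology class. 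I would then make these 2-spheres disjointly framed embedded (by general position, since $n-1 \geq 5$), attach 3-handles along them, and call the result $W$ with new top boundary $A$. The relative handle chain complex of $(W,B)$ on the universal cover is concentrated in degrees 2 and 3 with equal ranks; choosing the 3-handle attaching spheres as geometric duals of the 2-handle belt spheres makes the boundary map the identity matrix, so $H_*(W,B;\mathbb{Z}[H]) = 0$ and the associated Whitehead torsion vanishes. Poincar\'e--Lefschetz duality on the universal cover (compare the argument of Lemma \ref{perfect kernel}) then gives $H_*(W,A;\mathbb{Z}[H]) = 0$, surgery shows $\pi_1(A) \cong H$, and the Whitehead theorem combined with vanishing torsion yields that $A \hookrightarrow W$ is a simple homotopy equivalence.

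For \emph{uniqueness}, suppose $(W,A,B)$ and $(W',A',B)$ both satisfy the conclusion. I would form $V = W \cup_B \overline{W'}$, a nice relative cobordism from $A$ to $A'$. Van Kampen gives $\pi_1(V) = H *_{\pi_1(B)} H = H$, since amalgamating two copies of $H$ over a common surjection from $\pi_1(B)$ collapses to $H$. Excision yields $H_*(V,W;\mathbb{Z}[H]) \cong H_*(W',B;\mathbb{Z}[H]) = 0$, so $W \hookrightarrow V$, and hence $A \hookrightarrow V$, are homotopy equivalences; similarly for $A' \hookrightarrow V$. Thus $V$ is an $h$-cobordism between $A$ and $A'$. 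Additivity of Whitehead torsion together with the vanishing of $\tau(W,A)$ and $\tau(W',A')$ forces $\tau(V) = 0$, so $V$ is an $s$-cobordism. Since $n \geq 6$, the $s$-cobordism theorem gives a product structure on $V$ rel $\partial V$; chasing through this structure (and using that $B$ is separating in $V$ with a standard bicollar) produces the desired homeomorphism $W \approx W'$ rel $B$.

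The \emph{main obstacle} I anticipate is upgrading the homotopy equivalence $A \hookrightarrow W$ to a \emph{simple} homotopy equivalence. This forces the 3-handle attaching spheres to be chosen as geometric duals of the 2-handle belt spheres over $\mathbb{Z}[H]$, realizing the algebraic handle cancellation geometrically and killing the Whitehead torsion. The transversality, framing, and basepoint bookkeeping in the nonsimply connected setting --- standard but technical, and worked out in detail in \cite[\S 11.1]{FQ90} and \cite{Qui71} --- is the most delicate step of the existence argument.
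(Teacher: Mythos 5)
Your overall strategy --- attach $2$-handles to $B\times[0,1]$ along normal generators of $\ker h$, kill the resulting relative $H_2$ with $3$-handles realizing the dual classes, and get uniqueness from the $s$-cobordism theorem --- is exactly the route the paper intends; indeed the paper offers no proof beyond the remark that the argument of \cite[Th.~11.1A]{FQ90} goes through for $n\geq 6$ because the relevant $2$-spheres embed by general position. But one step of your existence argument is false as written: a loop bounding an immersed disk in $B$ would be null-homotopic in $B$, whereas the commutators $[x_i,y_i]$ normally generating $\ker h$ are in general nontrivial in $\pi_1(B)$. A commutator bounds a map of a once-punctured torus, so capping with the core of the $2$-handle yields a genus-one surface, not a sphere. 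To get spherical classes hitting the basis of $H_2(W_1,B;\mathbb{Z}[H])$ you should either argue algebraically --- perfectness gives $H_1(B;\mathbb{Z}[H])\cong(\ker h)_{\mathrm{ab}}=0$, so the long exact sequence gives $H_2(W_1;\mathbb{Z}[H])\twoheadrightarrow H_2(W_1,B;\mathbb{Z}[H])$ and Hurewicz in the simply connected cover makes these classes spherical --- or geometrically surger the capped surface to a sphere using disks bounded by $x_i,y_i$ in $W_1$; for the latter you must choose $x_i,y_i\in\ker h$ (which perfectness permits but you did not require), so that they die in $\pi_1(W_1)$.

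The uniqueness argument has a more serious gap at its final step. Your computation that $V=W\cup_B W'$ is an $s$-cobordism from $A$ to $A'$ is essentially right (though note that $\tau(V,W)=0$ requires the duality step converting the simplicity of $A'\hookrightarrow W'$ into the vanishing of the torsion of the acyclic complex $C_*(W',B;\mathbb{Z}[H])$, much as in Lemma \ref{perfect kernel}). But $V\approx A\times[0,1]$ only yields $A\approx A'$; it does not yield $W\approx W'$ rel $B$. A product cobordism can be split along a separating codimension-one submanifold into two pieces neither of which is standard, so no amount of chasing the product structure recovers the two halves individually. The standard repair is to run the comparison inside $W'$ rather than in the double: the $2$-handle attaching circles of $W$ lie in $\ker(\pi_1(B)\to\pi_1(W'))$, hence bound framed embedded disks in $W'$ by general position ($n\geq 6$), and likewise for the dual spheres, producing an embedding of $W$ into $W'$ rel $B$; one then checks that $\overline{W'\setminus W}$ is an $s$-cobordism from $A$ to $A'$ and absorbs it as a collar to conclude $W'\approx W$ rel $B$. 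This is the argument implicitly being imported from \cite[Th.~11.1A]{FQ90} and \cite{GT06}.
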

\begin{remark}
For $n = 5$, the above theorem still holds as long as $H$ is restricted to be "good" (see \cite[Th. 11.1A, P.195]{FQ90}). For $n\geq 6$, the proof is the same as the proof of Th. 11.1A in \cite[P.195]{FQ90} except that 2-spheres on which the 3-handles are attached embedded simply by general position. When $n = 4$, the theorem is false.
\end{remark}

When a nice rel one-sided $h$-cobordism has trivial Whitehead torsion, ie, when the corresponding
homotopy equivalence is simple, we refer to it as a \emph{nice rel plus cobordism}.

\begin{theorem}[Relativized Embedded Plus Construction]
\label{Embedded plus construction}
Let $R$ be a connected manifold of dimension at least $6$; $B$ a compact codimension $0$ submanifold of $\partial R$; and 
$$G \subseteq \ker(\pi_1(B)\to \pi_1(R))$$
a perfect group which is the  closure in $\pi_1(B)$ of a finite set of elements. Then there exists a nice rel plus cobordism $(W,A,B)$ embedded in $R$ which is the identity on $B$ for which $\ker(\pi_1(B)\to \pi_1(W))= G$.
\end{theorem}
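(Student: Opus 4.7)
The plan is to construct $W$ ambiently inside $R$ by attaching $2$- and $3$-handles to a collar of $B$, mimicking the proof of the abstract Relativized Plus Construction (Theorem~\ref{Plus construction}) but using the hypothesis $G \subseteq \ker(\pi_1(B)\to \pi_1(R))$ to realize the attaching cores inside $R$ itself rather than in an abstract collar.

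First I would pick elements $g_1,\ldots,g_k$ whose normal closure in $\pi_1(B)$ is $G$ and represent them by pairwise disjoint embedded loops $\gamma_i$ in $\operatorname{int}B$. Since each $g_i$ maps to $1$ in $\pi_1(R)$, each $\gamma_i$ bounds a singular $2$-disk in $R$; general position in $\dim R \geq 6$ (where $2+2<m$) upgrades these to disjoint, properly embedded, locally flat $2$-disks in $R$ with boundaries on $B$, which can be thickened to disjoint $2$-handles $h_i^2$ lying in a collar $B\times[0,1]\subset R$. Setting $V = (B\times[0,1])\cup\bigcup_i h_i^2$, the Seifert--van Kampen theorem yields $\ker(\pi_1(B)\to \pi_1(V)) = G$.

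Next I would attach $3$-handles to annihilate the extra $\pi_2$-classes created in the universal cover of $V$. Because $G$ is perfect, one can write each $g_i$ as a product of commutators $\prod_j [a_{ij},b_{ij}]$ with $a_{ij},b_{ij}\in G$. Since $a_{ij},b_{ij}\in G$ die in $\pi_1(V)$, the corresponding loops bound embedded disks in $V$ (by general position); assembling these disks with the cocores of the $2$-handles via the commutator relations produces embedded $2$-spheres $S_1,\ldots,S_k$ in $\partial V\setminus B$ that generate exactly the newly introduced $2$-cycles. Using the same null-homotopies ambiently in $R$ (which exist because $G$ lies in $\ker(\pi_1(B)\to\pi_1(R))$), each $S_i$ bounds an embedded $3$-disk in $R$ that can be pushed off $V$; thicken these to disjoint $3$-handles $h_i^3$. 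Setting $W = V\cup\bigcup_i h_i^3$ and letting $A$ be the complementary portion of $\partial W$ adjusted as in Remark~\ref{Remark: nice cobordism}, we obtain $W\subset R$ rel $B$ with $\ker(\pi_1(B)\to\pi_1(W))=G$ (since $3$-handles do not affect $\pi_1$), and the paired $2$/$3$-handle structure arranges that $A\hookrightarrow W$ is a simple homotopy equivalence, so $(W,A,B)$ is a nice rel plus cobordism.

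The principal obstacle is the second step: simultaneously realizing the $2$-spheres $S_i$ as disjoint, locally flat embedded spheres in $\partial V\setminus B$ and then finding disjoint embedded $3$-disks in $R\setminus V$ that bound them. This requires combining the perfectness of $G$ (which supplies the commutator-based null-bounding surfaces), general position in dimension $\geq 6$ (to upgrade immersions and singular bounding disks to locally flat embeddings), and careful tracking of normal-bundle framings to guarantee that the thickened handles are locally flat and the cancellation pattern is preserved. This step is the manifold realization inside $R$ of Quillen's classical plus construction, and it is precisely what distinguishes Theorem~\ref{Embedded plus construction} from the abstract version Theorem~\ref{Plus construction}.
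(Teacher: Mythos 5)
Your overall strategy---embedded $2$-handles along disjoint loops normally generating $G$ (which bound disks in $R$ because $G\subseteq\ker(\pi_1(B)\to\pi_1(R))$), followed by $3$-handles along $2$-spheres supplied by the perfectness of $G$---is the same strategy as the proof the paper relies on. Note, however, that the paper does not reprove this: its entire proof is the observation that the argument for Theorem 3.2 of \cite{GT06} goes through verbatim once the absolute plus construction is replaced by the relativized one (Theorem \ref{Plus construction}, needed because $B$ is a compact manifold \emph{with boundary} sitting inside the possibly noncompact $\partial R$, so the output is a \emph{nice relative} cobordism) and Poincar\'{e} duality is replaced by duality with compact supports for noncompact manifolds. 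Your sketch only gestures at the first of these relativizations (via Remark \ref{Remark: nice cobordism}) and does not engage with the second.

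The genuine gap is in your second step, which you yourself flag as the ``principal obstacle'' but then do not close. The assertion that ``each $S_i$ bounds an embedded $3$-disk in $R$ that can be pushed off $V$'' is justified only by the remark that the null-homotopies ``exist because $G$ lies in $\ker(\pi_1(B)\to\pi_1(R))$.'' That hypothesis controls $\pi_1$: it tells you that the \emph{loops} $\gamma_i$, $a_{ij}$, $b_{ij}$ bound disks in $R$. It says nothing about whether the $2$-spheres $S_i$ are null-homotopic in $R$ (a priori they may represent nontrivial classes of $\pi_2(R)$), let alone in $\overline{R\setminus V}$, which is what attaching an embedded $3$-handle on the far side of $V$ requires. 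The part of your claim that is cheap is the ``pushing off'': since $V$ collapses rel $B$ to a $2$-complex and $3+2<n$, any bounding $3$-ball in $R$ in general position misses the spine of $V$ and can be isotoped off $V$. The part that carries all the content is producing the bounding $3$-ball at all; this must be done by constructing the $S_i$ \emph{together with} explicit null-homotopies assembled from the chosen disks for the $a_{ij},b_{ij}$ and the handle cores (and, in dimension exactly $6$, dealing with the isolated double points of a mapped $3$-ball). That construction is precisely the substance of \cite[Thm.~3.2]{GT06}, so as written your proposal assumes the theorem's hardest point. Two smaller slips: the $2$-handles cannot lie in a collar $B\times[0,1]$ (a loop bounding a disk in the collar would already be trivial in $\pi_1(B)$; the disks must genuinely go out into $R$, and $V$ is a regular neighborhood of the collar union those disks), and you should say explicitly why $\ker(\pi_1(B)\to\pi_1(W))$ is exactly $G$ rather than merely containing it (it is the normal closure of the $\gamma_i$, which is $G$ by choice, and $3$-handles do not change $\pi_1$).
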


\begin{proof}
The proof of Theorem 3.2 in \cite{GT06} will work for our situation with simple replacement of plus construction by the relativized plus construction and duality by generalized Poincar\'{e} duality \cite[Thm.3.35, P. 245]{Hat02} for noncompact manifolds.
\end{proof}

\begin{proof}[Proof of Theorem \ref{Relativized theorem}]
Consider a manifold $M^m$ whose boundary $\partial M^m$ is homeomorphic to the interior of a compact manifold. Choosing a cofinal sequence of clean neighborhoods $\{N_i\}_i^\infty$ of an end, we must assure that $N_i \cap \partial M^m \approx \partial(N_i \cap \partial M^m) \times [0,1)$. 
With this setup, the notions of generalized $k$-neighborhoods in \cite{Gui00} can be directly applied. Otherwise one may re-define generalized $k$-neighborhoods by using frontiers instead of boundaries. For a full understanding, the reader should be familiar with the proof of the Main Existence Theorem \cite{Gui00}. To generalize all the arguments made in \cite{Gui00}, especially Theorem 5, Lemmas 13-15, one need use
frontiers $\operatorname{Fr}$ of generalized $k$-neighborhoods to replace boundaries $\partial$. All handle operations should be performed away from $\partial M^m$. This is doable for nearly the same reasons given by Siebenmann for \cite[Th.10.1]{Sie65}; in particular, all handle moves in
the proof \cite[Th. 1.1]{GT06} can be performed away from $\partial M^m$. More specifically, the above procedure will assure the end has generalized $(n-3)$-neighborhoods $\{U_i\}$. To modify $\{U_i\}$ to generalized $(n-2)$-neighborhoods, one has to replace Theorem 3.2 in \cite[P.554]{GT06} by Theorem \ref{Embedded plus construction}. Then imitate the argument in \cite[P.554-555]{GT06} via replacing $\partial$ by $\operatorname{Fr}$ and keeping the handle decompositions away from $\partial M^m$. 
\end{proof}

The proof of the sufficiency of Theorem \ref{Th: Characterization Theorem} follows readily from the following result.

\begin{proposition} \label{Prop: nice rel cobordisms}
If $M^m$ satisfies Conditions (\ref{condition a}) - (\ref{condition c}) of Theorem \ref{Th: Characterization Theorem} then there exists a clean compact exhaustion $\{C_i\}$ so that, for the corresponding neighborhoods of infinity $\{N_i\}$,
$\operatorname{Fr}N_i \hookrightarrow N_i$ is a homotopy equivalence.
\end{proposition}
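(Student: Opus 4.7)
The approach is to reduce, end by end, to the Relativized Pseudo-collarability Characterization Theorem (Theorem \ref{Relativized theorem}) after deleting the portion of the noncompact boundary contained in each partial $0$-neighborhood. Condition (\ref{condition a}), via Corollary \ref{Cor: inward tame implies locally connected} and Proposition \ref{Prop: finite ends}, together with Condition (\ref{condition b}) supply a strong $0$-neighborhood of infinity $N$ with finitely many components $\{N^j\}$, each $\partial_M N^j$-connected at infinity and $\partial_M N^j$-perfectly $\pi_1$-semistable at infinity.

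First I would fix such an $N$ and analyze each component $P:=N^j$ by passing to the open manifold $P\setminus\partial_M P$. The goal is to check the hypotheses of Theorem \ref{Relativized theorem} one by one. It is one-ended by Lemma \ref{Lemma: relA connected versus Q-A 1-ended}; its boundary $\operatorname{int}(\operatorname{Fr} P)$ is the interior of the compact manifold $\operatorname{Fr} P$; it is inward tame by Lemma \ref{Lemma: inward tameness of deleted manifolds}, noting that $P$ itself is inward tame because $M^m$ is; its $\operatorname{pro}$-$\pi_1$ at infinity is perfectly semistable by Lemma \ref{Lemma: relA pro-pi1 versus Q-A pro-pi1} combined with Condition (\ref{condition b}); and $\sigma_\infty(P\setminus\partial_M P)=0$ by Lemma \ref{Lemma wall obstruction M - A}, once we observe that $\sigma_\infty(P)=0$ follows from Condition (\ref{condition c}) together with the Sum Theorem for finiteness obstructions applied to the neighborhoods of infinity within $P$.

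Theorem \ref{Relativized theorem} would then produce a pseudo-collar $H^j\subseteq P\setminus\partial_M P$. I would transfer $H^j$ back to $M^m$ by letting $\widetilde{H}^j$ be the closure of $H^j$ in $M^m$, which simply adjoins to $H^j$ the limit points lying in $\partial_M P$. The crucial verification is that $\widetilde{H}^j$ is a homotopy collar in $M^m$: since $\partial_M P$ admits a collar in $P$, the inclusion $H^j\hookrightarrow\widetilde{H}^j$ is a homotopy equivalence, and $\operatorname{Fr}_{M^m}\widetilde{H}^j=\operatorname{Fr}_{P\setminus\partial_M P}H^j$; combining these two facts with the homotopy-collar property of $H^j$ shows that $\operatorname{Fr}_{M^m}\widetilde{H}^j\hookrightarrow\widetilde{H}^j$ is a homotopy equivalence. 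The identical transfer converts the arbitrarily small homotopy collars inside $H^j$ into arbitrarily small homotopy collars inside $\widetilde{H}^j$, so $\widetilde{H}^j$ is in fact a pseudo-collar in $M^m$.

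Finally, the disjoint union $\widetilde{H}:=\bigsqcup_j \widetilde{H}^j$ over the finitely many components is a pseudo-collar neighborhood of infinity in $M^m$. From $\widetilde{H}$ I would extract a cofinal nested sequence $\{N_i\}$ of homotopy collar neighborhoods of infinity; the complements $C_i:=\overline{M^m\setminus N_i}$ then form the required clean compact exhaustion. The main obstacle I anticipate is the transfer step: one must set up $\widetilde{H}^j$ as a clean submanifold of $M^m$ (carefully handling the potential corner where $\operatorname{Fr} H^j$ meets the collar of $\partial_M P$) and verify that adjoining the collared $\partial_M$-portion preserves the homotopy-collar property, so that the structure produced inside $P\setminus\partial_M P$ genuinely yields a pseudo-collar of $M^m$.
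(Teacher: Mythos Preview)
Your proposal is correct and follows essentially the same route as the paper: reduce each component $N^j$ to the manifold $N^j\setminus\partial_M N^j$, verify the hypotheses of the Relativized Pseudo-collarability Characterization Theorem via Lemmas \ref{Lemma: relA connected versus Q-A 1-ended}, \ref{Lemma: relA pro-pi1 versus Q-A pro-pi1}, \ref{Lemma: inward tameness of deleted manifolds}, and \ref{Lemma wall obstruction M - A}, and then transfer the resulting collar structure back across the collared $\partial_M$-piece. The only differences are organizational---the paper iterates over a full exhaustion $\{N_i\}$ and enlarges each $C_i$ by the compact ``gap'' $K_i^j=\overline{N_i^j\setminus V_i^j}$, whereas you apply Theorem \ref{Relativized theorem} once per component and extract the exhaustion afterward---and the paper makes the transfer step precise by arranging compact pieces $A_i^j\subseteq\operatorname{int}(\operatorname{Fr}N_i^j)$ and $B_i^j\subseteq\partial V_i^j$ with $\operatorname{int}(\operatorname{Fr}N_i^j)\setminus\operatorname{int}A_i^j=\partial V_i^j\setminus\operatorname{int}B_i^j\approx\partial A_i^j\times[0,1)$, which is exactly the ``corner'' issue you flagged (and which, incidentally, shows your asserted equality $\operatorname{Fr}_{M^m}\widetilde{H}^j=\operatorname{Fr}_{P\setminus\partial_M P}H^j$ is not literally true before this arrangement is made).
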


\begin{proof}
The proof is a variation on the argument of Proposition 10.2 in \cite{GG18}. By the definition of peripheral perfect semistability at infinity, we can
begin with a clean compact exhaustion $\{C_i\}_{i}^{\infty}$ of $M^m$ and a corresponding sequence of neighborhoods
of infinity $\{N_i\}_{i=1}^{\infty}$, each with a finite set of connected components $\{N_i^j\}_{j=1}^{k_i}$, so that for all $i\geq 1$ and $1 \leq j \leq k_i$,

\begin{enumerate}[i)]
\item $N_i^j$ is inward tame,
\item $N_i^j$ is $(\partial_M N_i^j)$-connected and $(\partial_M N_i^j)$-perfectly-semistable at infinity, and
\item $\sigma_\infty (N_i^j) = 0$.
\end{enumerate}

By Lemmas \ref{Lemma wall obstruction M - A} and \ref{Lemma: relA pro-pi1 versus Q-A pro-pi1}, we have
\begin{enumerate}[i')]
\item $N_i^j\backslash \partial M^m$ is inward tame,
\item $N_i^j\backslash \partial M^m$ is 1-ended and has perfectly semistable fundamental group at infinity, and
\item $\sigma_\infty (N_i^j \backslash \partial M^m) = 0.$
\end{enumerate}
These are precisely the three conditions of Theorem \ref{Relativized theorem}. In addition, $\partial(N_i^j\backslash \partial M^m) = \operatorname{int}(\operatorname{Fr}N_i^j)$, which is an interior of a compact codimension 1 submanifold of $M^m$. 
That means $N_i^j \backslash \partial M^m$ contains a homotopy collar neighborhood of infinity $V_i^j$, i.e., $\partial V_i^j \hookrightarrow V_i^j$ is a homotopy equivalence. Following the proof of Theorem \ref{Relativized theorem}, one can further arrange $\partial N_i^j \backslash \partial M^m$ $(= \operatorname{int}(\operatorname{Fr}N_i^j))$ and $\partial V_i^j$ contain clean compact codimension 0 submanifolds $A_i^j$ and $B_i^j$, respectively, so that $\operatorname{int}(\operatorname{Fr}N_i^j)\backslash \operatorname{int}A_i^j =\partial V_i^j \backslash \operatorname{int}B_i^j \approx \partial A_i^j \times [0,1)$. See Figure \ref{fig 2}.

\begin{figure}[h!]
        \centering
       \includegraphics[ width=10cm, height=6cm]{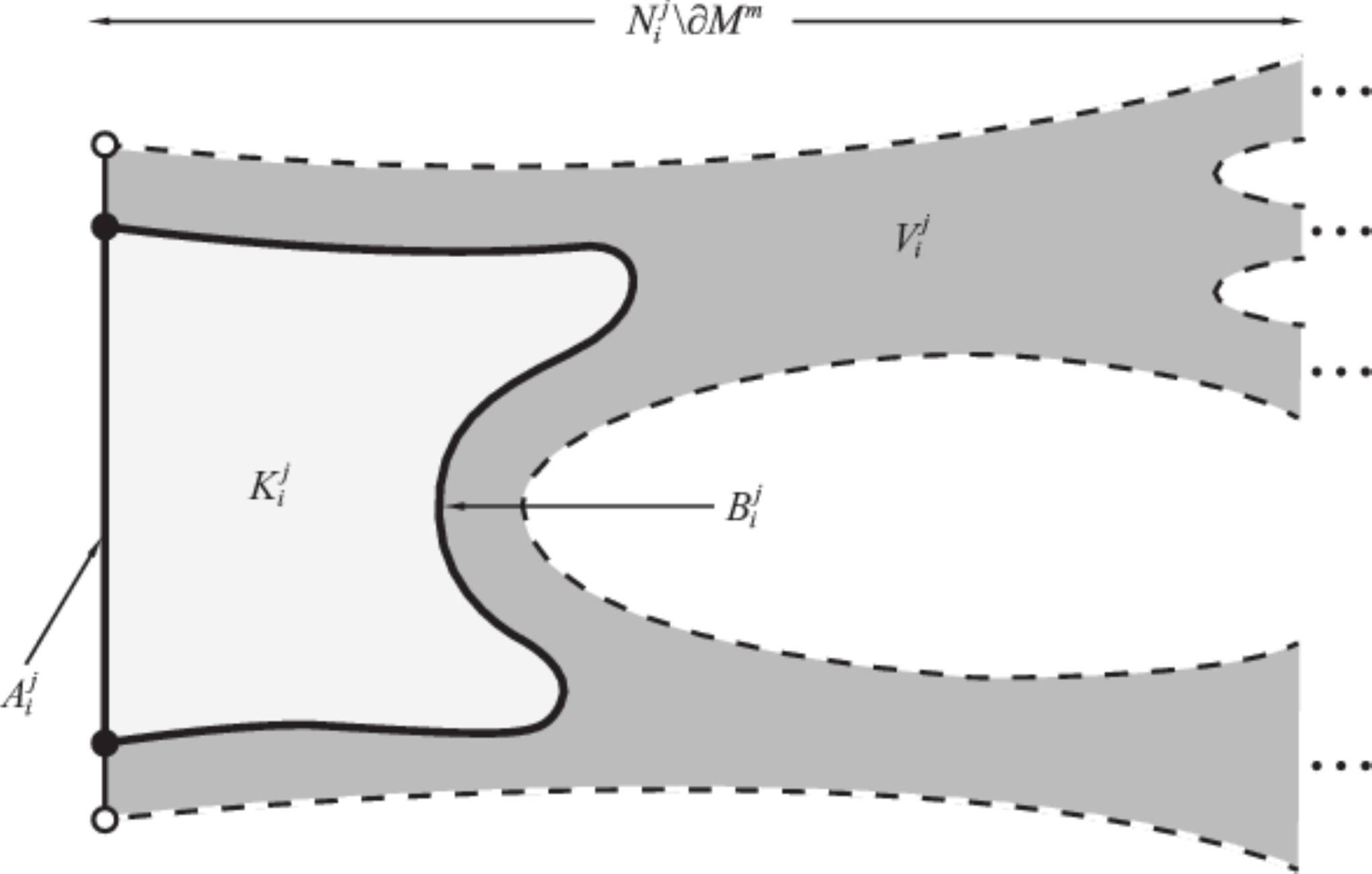}
       \caption{$V_i^j$ is a homotopy collar.}
        \label{fig 2}
\end{figure}

Note that $K_i^j = \overline{N_i^j\backslash V_i^j}$ is a clean codimension 0 submanifold of $M^m$ intersecting $C_i$ in $A_i^j$. To save on notation,
we replace $C_i$ with $C_i \cup (\cup K_i^j)$, which is still a clean compact codimension 0 submanifold of $M^m$, but with the additional property that
\begin{equation}
\operatorname{int}(\operatorname{Fr}N_i)\hookrightarrow N_i \backslash \partial M^m \text{ is  a homotopy equivalence.}
\end{equation}
Since adding $\partial_M N_i$ back in does not affect homotopy types, we have 
\begin{equation}
\operatorname{Fr}N_i \hookrightarrow N_i \text{ is a homotopy equivalence.}
\end{equation}
Having enlarged the $C_i$, if necessary, one can easily retain the property that $C_i \subseteq \operatorname{Int}C_{i+1}$ for all $i$ by passing
to a subsequence. Then $N_i = \overline{M^m\backslash C_i}$ gives a desired nested cofinal sequence of clean neighborhoods of infinity $\{N_i\}$
with the property that each inclusion $\operatorname{Fr}N_i \hookrightarrow N_i$ is a homotopy equivalence, i.e., $M^m$ is pseudo-collarable.
\end{proof}

\section{Questions}
The idea of pseudo-collarability is related to a term named $\mathcal{Z}$-compactification. The motivation first came from the modification of manifold completion applied to Hilbert cube manifolds in \cite{CS76}. A compactification
$\widehat{X}=X\sqcup Z$ of a space $X$ is a $\mathcal{Z}$\emph{-compactification }if, for every open set $U\subseteq\widehat{X}$,
$U\backslash Z\hookrightarrow U$ is a homotopy equivalence. This compactification has been proven to be useful in both
geometric group theory and manifold topology, for example, in attacks on the Borel and Novikov Conjectures. A major open problem is a characterization of $\mathcal{Z}$-compactifiable manifolds \cite{CS76} \cite{GT03} \cite{GG18}.
\begin{question}\label{big question}
Are Conditions (\ref{Char1}), (\ref{Char3}) and (\ref{Char4}) of Theorem \ref{Th: Completion Theorem} sufficient for manifolds to be $\mathcal{Z}$-compactifiable?
\end{question}
Although it's still not well-understood whether these conditions are sufficient, in \cite{GG18}, Guilbault and the author provided a best possible result.

\begin{theorem}
An $m$-manifold $M^m$ ($m\geq 5$) satisfies Conditions (\ref{Char1}), (\ref{Char3}) and (\ref{Char4}) of Theorem \ref{Th: Completion Theorem}, if and only if $M^m \times [0,1]$ admits a $\mathcal{Z}$-compactification.
\end{theorem}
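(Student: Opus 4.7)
The plan is to prove both implications, with the forward (necessity) direction following from standard properties of $\mathcal{Z}$-compactifications together with the fact that $M^m$ is a deformation retract of $M^m \times [0,1]$, and the reverse (sufficiency) direction requiring a dedicated construction that exploits the extra $[0,1]$ factor to compensate for the possible failure of condition (\ref{Char2}).

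For necessity, suppose $\widehat{X}=(M^m\times [0,1]) \sqcup Z$ is a $\mathcal{Z}$-compactification. I would first observe that any space admitting a $\mathcal{Z}$-compactification is inward tame: arbitrarily small open neighborhoods $U$ of $Z$ in $\widehat{X}$ are finitely dominated as open subsets of the compact ANR $\widehat{X}$, and the $\mathcal{Z}$-set condition makes $U \cap (M^m\times [0,1]) \hookrightarrow U$ a homotopy equivalence, producing finitely dominated cofinal neighborhoods of infinity in $M^m \times [0,1]$. Inward tameness then descends to $M^m$ via the retraction $M^m\times [0,1]\to M^m$. For the vanishing of $\sigma_{\infty}$ and $\tau_\infty$, the $\mathcal{Z}$-compactification allows each such $U$ to be homotopically approximated by compact subcomplexes of $\widehat{X}$, which trivializes both obstructions for $M^m \times [0,1]$; because the inclusion $M^m \hookrightarrow M^m \times [0,1]$ induces $\pi_1$-isomorphisms on corresponding neighborhoods of infinity, naturality of $\widetilde{K}_0$ and $\operatorname{Wh}$ transfers the vanishing back to $M^m$.

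For sufficiency, given (\ref{Char1}), (\ref{Char3}), (\ref{Char4}), the strategy is to construct the compactification directly from a cofinal sequence $\{N_i\}$ of clean neighborhoods of infinity in $M^m$. The product $N_i \times [0,1]$ sits inside an ambient $(m+1)$-manifold in which the extra interval direction can absorb possibly-wild $\pi_1$-behavior at infinity. Concretely, I would use the Embedded Plus Construction (Theorem \ref{Embedded plus construction}), with all handle moves performed in the interior of $M^m \times [0,1]$ away from the boundary, to geometrically realize the inverse system of fundamental groups as a nested sequence of one-sided $h$-cobordisms whose telescope exhausts a neighborhood of infinity in $M^m \times [0,1]$. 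The vanishing of $\sigma_\infty(M^m)$ controls the finiteness obstruction at each stage so that the successive cobordisms can be chosen compact, while $\tau_\infty(M^m)=0$ controls the Whitehead torsion so that the infinite telescope assembles coherently into a compactification $\widehat{X}=(M^m\times [0,1])\sqcup Z$. A final check verifies that $Z$ is a $\mathcal{Z}$-set: small neighborhoods of $Z$ in $\widehat{X}$ deformation retract onto their intersections with $M^m\times [0,1]$ thanks to the homotopy-collar structure built into the telescope.

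The main obstacle will be the sufficiency direction, specifically the assembly of the telescope of one-sided $h$-cobordisms into a genuine compactification with $\mathcal{Z}$-set boundary rather than a mere pseudo-collar. While the vanishing of $\sigma_\infty$ and $\tau_\infty$ provides the required algebraic control, translating that control into a geometric compactification requires careful dimension-respecting arguments that use the extra $[0,1]$ factor essentially. Without condition (\ref{Char2}) the $\pi_1$ inverse system at infinity genuinely need not stabilize, and one must verify that the non-stable portion can be systematically pushed into $Z$ via handle moves that take advantage of the thicker ambient manifold. This step is analogous in spirit to Chapman's classical $\mathcal{Z}$-compactification theorem for Hilbert cube manifolds, where the infinite-dimensional $Q$-factor supplies essentially unlimited room to perform such manipulations; here the single $[0,1]$ factor must be leveraged much more carefully to play an analogous role.
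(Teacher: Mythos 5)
A preliminary remark: this paper states the theorem without proof --- it is quoted from \cite{GG18} --- so there is no in-paper argument to compare against; what follows measures your proposal against the argument supplied by that reference and by this paper's own framework.

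Your necessity direction is correct in outline: inward tameness and the vanishing of $\sigma_{\infty}$ and $\tau_{\infty}$ are the standard Chapman--Siebenmann necessary conditions for $\mathcal{Z}$-compactifiability of a locally compact ANR, and they transfer from $M^{m}\times[0,1]$ back to $M^{m}$ along the proper homotopy equivalence $M^{m}\hookrightarrow M^{m}\times[0,1]$. The sufficiency direction, however, contains a genuine gap. A telescope of embedded plus constructions produces at best a pseudo-collar neighborhood of infinity of $M^m\times[0,1]$, and the step you yourself flag as the main obstacle --- upgrading that telescope to a compactification with $\mathcal{Z}$-set remainder --- is not a technical detail to be checked but is essentially one of the open questions recorded in the final section of this paper (whether pseudo-collarability together with $\tau_{\infty}=0$ forces $\mathcal{Z}$-compactifiability); the vanishing of $\sigma_{\infty}$ and $\tau_{\infty}$ is not known to supply the needed control. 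Worse, the plus-construction machinery you invoke requires perfect semistability of the fundamental group systems at infinity (Condition (\ref{condition b}) of Theorem \ref{Th: Characterization Theorem}), which is not among your hypotheses and does not follow from Conditions (\ref{Char1}), (\ref{Char3}) and (\ref{Char4}). As written, your plan reduces the theorem to an open problem.

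The idea you are missing is that the $[0,1]$ factor is used not to absorb wild $\pi_{1}$-behavior by handle moves, but to render Condition (\ref{Char2}) of Theorem \ref{Th: Completion Theorem} automatic. Clean neighborhoods of infinity of $M^{m}\times[0,1]$ may be chosen of the form $N\times[0,1]$, and $\partial_{M\times[0,1]}\left(N^{j}\times[0,1]\right)$ contains the entire slice $N^{j}\times\{0\}$, onto which $N^{j}\times[0,1]$ deformation retracts; hence each peripheral fundamental group system $\left\{\pi_{1}\left(\partial_{M\times[0,1]}(N^{j}\times[0,1])\cup V_{k}\right)\right\}$ is pro-isomorphic to the constant system $\pi_{1}(N^{j})$ and, in particular, is stable. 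Conditions (\ref{Char1}), (\ref{Char3}) and (\ref{Char4}) pass from $M^{m}$ to $M^{m}\times[0,1]$, which is an $(m+1)$-manifold with $m+1\geq 6$ (this is why the hypothesis is $m\geq 5$ rather than $m\geq 6$), so Theorem \ref{Th: Completion Theorem} applies and $M^{m}\times[0,1]$ is completable. A manifold completion is a $\mathcal{Z}$-compactification, because a compactum in the boundary of a compact manifold is a $\mathcal{Z}$-set (push off the boundary using a collar). No telescopes, plus constructions, or Hilbert-cube-style absorption arguments are required for this direction.
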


\begin{remark}
Conditions (\ref{Char1}), (\ref{Char3}) and (\ref{Char4}) are precisely the conditions that characterize $\mathcal{Z}$-compactifiable Hilbert cube manifolds \cite{CS76}. The early version of Question \ref{big question} was posed more generally in \cite{CS76} for locally ANR's, but in \cite{Gui01} a counterexample was constructed. 
\end{remark}

Obviously, completable manifolds are both pseudo-collarable and $\mathcal{Z}$-compactifiable. Despite the fact that many manifolds such as Davis's manifolds are both pseudo-collarable and $\mathcal{Z}$-compactifiable but not completable, the relationship between pseudo-collarable manifolds and $\mathcal{Z}$-compactifiable manifolds are not well-understood. There are several interesting questions around such topic.

\begin{question}
Are pseudo-collarability and Condition (\ref{Char4}) of Theorem \ref{Th: Completion Theorem} sufficient for manifolds to be $\mathcal{Z}$-compactifiable?
\end{question}

\begin{question}\label{Question: Z-compact implies pseudo}
Are $\mathcal{Z}$-compactifiable manifolds pseudo-collarable?
\end{question}
We suspect the answer to Question \ref{Question: Z-compact implies pseudo} is negative. Crossing manifolds constructed in \cite{KM62}, \cite{Ste77}
and \cite{Gu18} with half-open interval $[0,1)$ might be potential counterexamples. However, the biggest obstacle is closely related to the following question in knot theory.

\begin{question}
Let $K$ be a trefoil knot and $\operatorname{WD}(K)$ be a twisted Whitehead double of $K$. Is the knot group of $\operatorname{WD}(K)$ hypoabelian?
\end{question} 
\begin{definition}
A group $G$ is said to \emph{hypoabelian} if the following equivalent conditions are satisfied:
\begin{enumerate}
\item $G$ contains no nontrivial perfect subgroup
\item the transfinite derived series terminates at the identity. (Note that this is the transfinite derived series, where the successor of a given subgroup is its commutator subgroup and subgroups at limit ordinals are given by intersecting all previous subgroups.)
\end{enumerate}
\end{definition}

Question \ref{Question: Z-compact implies pseudo} is related to the following open question posed in \cite{GT03} 

\begin{question}
Can a $\mathcal{Z}$-compactifiable open $n$-manifold fail to be pseudo-collarable?
\end{question}

\section*{Acknowledgements}
The work presented here is part of the author's PhD dissertation at University of Wisconsin-Milwaukee. I would like to express my sincere gratitude to my thesis advisor, Craig Guilbault, for his guidance, enthusiasm and encouragement in course of this work.

\end{document}